\documentclass[11pt]{article}

\usepackage{geometry}   
\geometry{height=9.5in,a4paper,hmargin={2.5cm,2.5cm}}   
\usepackage{authblk}

\usepackage[utf8]{inputenc} 
\usepackage{hyperref}       
\usepackage{url}            
\usepackage{booktabs}       
\usepackage{amsfonts}       
\usepackage{nicefrac}       
\usepackage{microtype}      

\usepackage{amsmath,amsfonts,amssymb,amsthm,array}

\usepackage{algorithm}
\usepackage{caption}
\usepackage{subcaption}
\usepackage{algpseudocode}
\usepackage{mdframed} 
\usepackage{thmtools}

\usepackage{cases}
\usepackage{graphicx}
\usepackage{epstopdf}

\DeclareMathOperator*{\argmin}{arg\,min}

\newcommand{\R}{\mathbb{R}}

\def\<#1,#2>{\langle #1,#2\rangle}

\usepackage[colorinlistoftodos,bordercolor=orange,backgroundcolor=orange!20,linecolor=orange,textsize=scriptsize]{todonotes}

\declaretheorem[style=shaded,within=section]{definition}
\declaretheorem[style=shaded,sibling=definition]{theorem}

\declaretheorem[style=shaded,sibling=definition]{assumption}

\declaretheorem[style=shaded,sibling=definition]{lemma}

\title{\bf A matrix-free interior point continuous trajectory for linearly constrained convex programming}

\date{}

\author[1]{Xun Qian\thanks{Email: \texttt{xunqian2099@163.com}}}
\author[1]{Li-Zhi Liao\thanks{The work of L.-Z. Liao was supported in part by grants from Hong Kong Baptist University
		(FRG) and General Research Fund (GRF) of Hong Kong, email: \texttt{liliao@associate.hkbu.edu.hk}}}
\author[2]{Jie Sun\thanks{The work of J. Sun was partially supported
		by Australia Council Research under grant DP160102819 and the National Science Foundation of China under Grant 11671026, email:  \texttt{jie.sun@curtin.edu.au}}}

\affil[1]{Department of Mathematics, Hong Kong Baptist University, Hong Kong SAR, P. R. China}
\affil[2]{Faculty of Science and Engineering, Curtin University, Perth, Australia}

\begin{document}
	
	\maketitle
	
\begin{abstract}
Interior point methods for solving linearly constrained convex programming involve a variable projection matrix at each iteration to deal with
the linear constraints. This matrix often becomes ill-conditioned near the boundary of the feasible region that results in wrong search directions and
extra computational cost. A matrix-free interior point augmented Lagrangian continuous trajectory is therefore proposed and studied for linearly constrained
convex programming. A closely related ordinary differential equation (ODE) system is formulated. In this ODE system, the variable projection matrix
is no longer needed. By only assuming the existence of an optimal solution, we show that, starting from any interior feasible point,
(i) the interior point augmented Lagrangian continuous trajectory is convergent; and (ii) the limit point is indeed an optimal solution
of the original optimization problem. Moreover, with the addition of the strictly complementarity condition, we show that the associated Lagrange
multiplier converges to an optimal solution of the Lagrangian dual problem. Based on the studied ODE system, several possible search directions for discrete algorithms are proposed and discussed. 
\end{abstract}

\section{Introduction}

Consider the following linearly constrained convex programming problem
\begin{equation}\tag{\textrm{P}} \nonumber\label{primal}
	\begin{array}{rcl}
		\min & & f(x) \\
		{\rm s.t.\ }& &Ax= b, \ x_i\ge 0, \ i=1,...,s,
	\end{array}
\end{equation}
where $x\in \R^n$, $0\leq s \leq n$, $f(x)$ is convex and twice continuously differentiable, $A$ is an $m$ by $n$ matrix with full row rank.
As a blanket assumption, we assume that the optimal value for problem (\ref{primal}) is finite and attainable.
In addition, the following notations are used in this paper:
\begin{eqnarray*}
	&{\R^n_{s+}}=\{x\in \R^n| x_i\ge 0, \ 1\leq i\leq s\}, &\ {\R^n_{s++}}=\{x\in \R^n| x_i>0, \ 1\leq i\leq s \},
	\\
	&
	{\cal P^+}=\{x\in \R^n_{s+}|Ax=b\},\ \  \mbox{and}&\ {\cal P^{++}}=\{x\in \R^n_{s++}|Ax=b\}.
\end{eqnarray*}

\vskip 2mm

The Lagrangian function ${\cal L}:\ \R^n\times \R^m \times \R^s \to \R$ associated with (\ref{primal}) is defined for every $(x,y,z) \in \R^n\times \R^m \times \R^s$ by
$$
{\cal L}(x,y,z) = f(x) + y^T(Ax-b) - \sum\limits_{i=1}^s x_iz_i,
$$
and the Lagrangian dual problem associated with (\ref{primal}) is
\begin{equation}\tag{\textrm{D}} \nonumber\label{dual}
	\begin{array}{rcl}
		\max & & L(y,z) \\
		{\rm s.t.\ }& & z\ge 0,
	\end{array}
\end{equation}
where $L(y,z) = \inf\limits_{x} {\cal L}(x,y,z)$.

\vskip 2mm

In interior point methods, the linear constraint $Ax=b$ is maintained at each iteration, which means that an $m\times m$ linear system is generally involved.
For example, in the primal affine scaling algorithm \cite{Gonzaga02} for problem (\ref{primal}) with $s=n$, a projection matrix $P_{AX} = I - XA^T(AX^2A^T)^{-1}AX$
where $X = {\rm diag\ }(x)$ is used at each iteration. However, if $m$ is very large, then the inverse of the $m\times m$ matrix could be quite expensive. In addition,
as $x$ moves to the boundary of ${\cal P^+}$, the matrix $(AX^2A^T)$ could become ill-conditioned.
This paper focuses on avoiding this possible ill-conditioning problem caused by the linear constraints for problem (\ref{primal}).
Our strategy is to only maintain the positivity of $x$ while relaxing the equality constraint $Ax=b$.
This can be accomplished by combining the traditional interior point methods with the augmented Lagrangian method.
We call this new method the matrix-free interior point augmented Lagrangian method. Much attention will be paid to the trajectory of this method,
which is actually the solution of an ordinary differential equation (ODE) system.

\vskip 2mm

In particular, we are interested in the continuous solution trajectory of the following ODE system
\begin{equation}\label{odeipalm}
	\left\{
	\begin{array}{l}
		\frac{dx}{dt} = -U^{2}\left[\nabla f(x) + A^Ty + \sigma_1A^T(Ax-b)\right] , \ x(t_0) = x^0 \in {\R^n_{s++}}, \\
		\noalign{\vskip 1mm}
		\frac{dy}{dt} = \sigma_2(Ax-b), \ y(t_0) = y^0 \in {\R^m},  \\
	\end{array}\right.
\end{equation}
where
\begin{eqnarray*}
	&&\frac{1}{2}\leq \gamma <1,\ t_0\geq 0,\ \sigma_1>0,\ \sigma_2>0 , \\
	&&x\in \R^n_{s++},\ u\in \R^n,\ \{u_i\}_{i=1}^s=\{x_i^{\gamma}\}_{i=1}^s, \ u_i=1 \ \hbox{for} \ i=s+1,\dots,n, \\
	&&X={\rm diag\ }(x)\in \R^{n\times n},\ U={\rm diag\ }(u)\in \R^{n\times n}.
\end{eqnarray*}

\vskip 2mm

\noindent Let us explain where the ODE system (\ref{odeipalm}) comes from. Problem (\ref{primal}) can be written equivalently as
\begin{equation}\tag{\textrm{$P^{\prime}$}} \nonumber\label{primalpie}
	\begin{array}{rcl}
		\min & & f(x) + \delta_{\R^n_{s+}}(x) \\
		{\rm s.t.\ }& &Ax= b,\ x\in \R^n,
	\end{array}
\end{equation}
where $\delta_{\R^n_{s+}}(x)$ is the indicator function of $\R^n_{s+}$ defined as
\begin{equation}\nonumber
	\delta_{\R^n_{s+}}(x) =
	\begin{cases}
		0 &\mbox{if $x\in {\R^n_{s+}}$},\\
		+\infty &\mbox{otherwise}.
	\end{cases}
\end{equation}
Then the Lagrangian function $\tilde {\cal L}:\ \R^n\times \R^m \to \R$ associated with problem (\ref{primalpie}) is defined for every $(x,y) \in \R^n\times \R^m $ as
$$
\tilde {\cal L}(x,y) = f(x) + \delta_{\R^n_{s+}}(x) + y^T(Ax-b),
$$
and the augmented Lagrangian function ${\tilde {\cal L}}_{\sigma_1}:\ \R^n\times \R^m \to \R$ associated with problem (\ref{primalpie}) is defined for every $(x,y) \in \R^n\times \R^m $ as
\begin{equation}\label{auglagrangianf}
	{\tilde {\cal L}}_{\sigma_1}(x,y) = f(x) + \delta_{\R^n_{s+}}(x) + y^T(Ax-b) + \frac{\sigma_1}{2}\|Ax-b\|^2,
\end{equation}
where $\sigma_1 > 0$ is a parameter. To solve problem (\ref{primalpie}), the augmented Lagrangian method can be used. The augmented Lagrangian method was first proposed by Hestenes \cite{Hestenes69} and Powell \cite{Powell69}. Since then researchers have studied the augmented Lagrangian method in many different ways. For example, in \cite{Rockafellar76a,Rockafellar76b}, the augmented Lagrangian method and the proximal point method were studied by Rockafellar, and its convergence rate was obtained. In \cite{Tseng93}, the exponential method of multipliers, which operates like the usual augmented Lagrangian method except that it uses an exponential penalty function in place of the usual quadratic, was analyzed by Tseng and Bertsekas. In \cite{Iusem99}, the augmented Lagrangian methods and proximal point methods for convex optimization were considered by Iusem, and by using the generalized distances (Bregman distances and $\phi-$divergences), the generalized proximal point methods and the generalized augmented Lagrangian methods were proposed and studied. In \cite{Andreani07}, the augmented Lagrangian methods with general lower-level constraints were considered by Andreani $et$ $al.$, and the global convergence was obtained by using the constant positive linear dependence constraint qualification. In \cite{Sundef08}, the augmented Lagrangian method for nonlinear semidefinite programming was studied by Sun $et$ $al.$, and the linear convergence rate was obtained under the constraint nondegeneracy condition and the strong second-order sufficient condition. Zhao $et$ $al.$ \cite{Zhao10} considered a Newton-CG augmented Lagrangian method for solving semidefinite programming problems from the perspective of approximate semismooth Newton methods, and the convergence rate was analyzed by characterizing the Lipschitz continuity of the corresponding solution mapping at the origin. However, the method in \cite{Zhao10} may encounter numerical difficulty for degenerate semidefinite programming problems. In order to tackle this numerical difficulty, Yang $et$ $al.$ \cite{Yang15} employed a majorized semismooth Newton-CG augmented Lagrangian method coupled with a convergent 3-block alternating direction method of multipliers introduced by Sun $et$ $al.$ \cite{Sundef14}. In \cite{He15}, a new splitting version of the augmented Lagrangian method with full Jacobian decomposition for separable convex programming was proposed by He $et$ $al.$, and the worst-case convergence rate measured by the iteration complexity in both the ergodic and nonergodic senses was obtained.

\vskip 2mm

Applying the augmented Lagrangian method to problem (\ref{primalpie}), from any initial point $(x^0,y^0) \in {\R^n_{s+}}\times {\R^m}$, for $k = 1, 2, \cdots,$ we have the following iteration scheme

\vskip 2mm

\noindent Step 1. Compute
$$
x^{k+1} = \argmin_x {\tilde {\cal L}}_{\sigma_1}(x,y^k).
$$
Step 2. Compute
$$
y^{k+1} = y^k + \tau_k \sigma_1(Ax^{k+1} - b),
$$
where $\tau_k > 0$ is the step size.
In Step 1, we need to minimize a convex function over ${\R^n_{s+}}$. There are many methods to solve this subproblem. One of them is the interior point method. Particularly, we can use a first-order interior point method which is extended directly from the method in \cite{Tseng11}. In \cite{Tseng11}, Tseng $et$ $al.$ proposed a first-order interior point method for linearly constrained smooth optimization which unifies and extends
the first-order affine scaling method and the replicator dynamics method (see \cite{Bomze02}) for standard quadratic programming. Notice that the method in \cite{Tseng11} cannot be applied to the subproblem in Step 1 directly except $s=n$ in problem (\ref{primal}). However, we can just replace $x_i$ with $1$ in $X$ for $s+1\leq i \leq n$ to handle this, and the resulting search direction has the same form as $\frac{dx}{dt}$ in the ODE system (\ref{odeipalm}) (we restrict $\frac{1}{2}\leq \gamma <1$). In fact, if $\gamma=1$, the direction $\frac{dx}{dt}$ in the ODE system (\ref{odeipalm}) is the first-order affine scaling direction. The affine scaling algorithm was first introduced by Dikin \cite{Dikin67} in 1967. Since then many researchers have studied the affine scaling algorithm in many different ways. For instance, the affine scaling algorithm in linear programming was studied by Dikin \cite{Dikin74}, Saigal \cite{Saigal96}, Tseng and Luo \cite{Tseng92}, Tsuchiya \cite{Tsuchiya91}, and so on. The affine scaling continuous trajectory was also studied for linear programming, for example, by Adler and Monteiro \cite{Adler91}, Liao \cite{Liao14}, Megiddo and Shub \cite{Megiddo89}, Monteiro \cite{Monteiro91}. For convex quadratic programming and more general convex programming, the affine scaling algorithm was studied by Gonzaga and Carlos \cite{Gonzaga02}, Monteiro and Tsuchiya \cite{Monteiro98a}, Sun \cite{Sun93,Sun96}, Tseng $et$ $al.$ \cite{Tseng11}, Ye and Tse \cite{Ye89}. The affine scaling algorithm for convex semidefinite programming was studied by Qian $et$ $al.$ \cite{Qian18a}. Since this interior point method for the subproblem in Step 1 is an iterative method, in each iteration of the augmented Lagrangian method, $x$ can be updated several times and $y$ may be updated only once. Hence we may wonder at each iteration of the augmented Lagrangian method, whether $x$ could be only updated once. This is part of the motivation of this paper, and the ODE system (\ref{odeipalm}) is exactly the continuous realization of this idea.

\vskip 2mm

For simplicity, in what follows, $\|\cdot\|$ denotes the 2-norm. $C^k$ stands for the class of $k$th order continuously differentiable functions. Unless otherwise specified,
$x_j$ denotes the $j$th component of a vector $x$, and $I$ denotes the identity matrix, the dimension of $I$ is clear
from the context.
For any index subset $J \subseteq \{1, \ldots, n\}$,
we denote $x_{J}$ as the vector composed of those components of $x \in \R^n$ indexed by $j\in J$, and denote $Q_{JJ}$ as the submatrix of $Q$ composed by choosing the indexed rows and columns in $J$.

\vskip 2mm

The rest of this paper is organized as follows. First, a potential function for the ODE system (\ref{odeipalm}) is introduced in Section 2.
Furthermore, it is verified that the ODE system (\ref{odeipalm}) has a unique solution in $[t_0,+\infty )$.
With the help of this potential function, in Section 3, we prove that every accumulation point of the continuous trajectory $x(t)$ of the ODE system (\ref{odeipalm})
is an optimal solution for problem (\ref{primal}).
In Section 4, we show the strong convergence of the continuous trajectory $x(t)$ and verify that the limiting point has the maximal number of the positive components
in $\{x_1,\dots,x_s\}$ among the optimal solutions. Then with the addition of the strictly complementarity condition, we show that the associated Lagrange multiplier $(y(t), z(t)_S)$
converges to an optimal solution of the Lagrangian dual problem (\ref{dual}). In Section 5, two numerical examples are provided to show the performance of
the solution trajectory of the ODE system (\ref{odeipalm}). Several possible search directions for discrete algorithms which are derived from the ODE system (\ref{odeipalm}) are discussed briefly in Section 6.  Finally, some concluding remarks are drawn in Section 7.

\section{Properties of the solution trajectory of the ODE system (\ref{odeipalm})}
The following assumptions are made throughout  this paper.

\begin{assumption}\label{optb}
	There exists a point $x^* \in {\cal P^+}$ such that $f(x^*)$ is the optimal value of problem ($\ref{primal}$).
\end{assumption}

\begin{assumption}\label{fxinC2}
	$f(x) \in C^2$ on $\R^n_{s+}$.
\end{assumption}

\begin{assumption}\label{rankA}
	The matrix $A$ has full row rank $m$.
\end{assumption}

Theorem \ref{x(t)positive} below guarantees the existence and uniqueness for the solution of the ODE system (\ref{odeipalm}) .

\begin{theorem}\label{x(t)positive}
	For the ODE system (\ref{odeipalm}), there exists a unique solution $(x(t), y(t))$ with a maximal existence interval $[t_0, \alpha)$, in addition, $x_i(t)>0$ for $i=1,\dots,s$ on the existence interval.
\end{theorem}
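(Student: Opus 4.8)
The plan is to invoke the standard existence–uniqueness theorem for ODEs (Picard–Lindelöf / Cauchy–Lipschitz) on the open set $\R^n_{s++}\times\R^m$, and then show that the solution cannot leave this set in finite forward time, so that the maximal existence interval $[t_0,\alpha)$ lies entirely inside it. First I would observe that the right-hand side of (\ref{odeipalm}), viewed as a vector field $F(x,y)$ on $\R^n_{s++}\times\R^m$, is $C^1$ there: $\nabla f$ is $C^1$ by Assumption \ref{fxinC2}, the maps $x\mapsto U(x)$ and $x\mapsto X$ are $C^\infty$ on $\R^n_{s++}$ (since $x_i^\gamma$ is smooth for $x_i>0$), and the affine terms $A^Ty$, $\sigma_1 A^T(Ax-b)$, $\sigma_2(Ax-b)$ are obviously smooth. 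Hence $F$ is locally Lipschitz on the open set $\R^n_{s++}\times\R^m$, and the classical theorem yields a unique maximal solution $(x(t),y(t))$ on some interval $[t_0,\alpha)$ with $x(t)\in\R^n_{s++}$ for all $t$ in that interval; in particular $x_i(t)>0$ for $i=1,\dots,s$ on $[t_0,\alpha)$.

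The substantive point is that $\alpha$ is genuinely the maximal interval for the \emph{constrained} problem, i.e.\ that the trajectory does not hit the boundary $\{x_i=0\}$ for some $i\le s$ at a finite time. For a fixed coordinate $i\in\{1,\dots,s\}$ we have $\dot x_i = -x_i^{2\gamma}\,g_i(t)$, where $g_i(t) = [\nabla f(x(t)) + A^Ty(t) + \sigma_1 A^T(Ax(t)-b)]_i$; on any compact subinterval $[t_0,T]\subset[t_0,\alpha)$ the quantity $g_i(t)$ is bounded, say $|g_i(t)|\le M$. Since $\tfrac12\le\gamma<1$, write $\tfrac{d}{dt}\,x_i^{1-2\gamma}$ or, more cleanly, note that $x_i$ satisfies a scalar differential inequality $\dot x_i \ge -M\,x_i^{2\gamma}$ that, by comparison with the explicit solution of $\dot v = -Mv^{2\gamma}$ (whose solutions stay strictly positive for all finite time when $2\gamma\ge 1$, since $v^{1-2\gamma}$ grows at most linearly, or $\log v$ decays linearly when $\gamma=\tfrac12$), shows $x_i(t)$ is bounded below by a strictly positive function of $t$ on $[t_0,T]$. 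Hence $x_i(t)$ cannot reach $0$ in finite time, so the only way the maximal interval can be finite is blow-up of $(x(t),y(t))$ to infinity, which is exactly what "maximal existence interval" allows; there is nothing further to prove about the endpoint $\alpha$ (it may be $+\infty$ or finite, with finiteness forced by unboundedness — the later sections will in fact rule this out using the potential function, but that is not needed here).

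The main obstacle, and the only place a little care is required, is the comparison argument keeping $x_i$ away from $0$: one must handle the degenerate exponent $\gamma=\tfrac12$ (where the comparison equation is $\dot v=-Mv$, giving exponential but still strictly positive decay) and the range $\tfrac12<\gamma<1$ (where $v(t)^{1-2\gamma}$ satisfies a linear ODE) uniformly, and one must be sure the bound $M$ on $g_i$ is legitimate — this is where local boundedness of $\nabla f$, $y(t)$, and $Ax(t)-b$ on compact subintervals of $[t_0,\alpha)$ is used, which is automatic since $(x(t),y(t))$ is continuous and stays in $\R^n_{s++}\times\R^m$. Once positivity is secured, local existence and uniqueness on the open set do the rest.
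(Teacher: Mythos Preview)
Your first paragraph is exactly the paper's proof: invoke Picard--Lindel\"of on the open set $\R^n_{s++}\times\R^m$ (the RHS is locally Lipschitz there by Assumption~\ref{fxinC2}), obtain a unique maximal solution on $[t_0,\alpha)$ with values in that open set, and conclude $x_i(t)>0$ for $i\le s$ automatically. The paper stops there, citing Bourbaki.

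Your second and third paragraphs are correct but superfluous for \emph{this} theorem. Once the maximal interval is taken relative to the open domain $\R^n_{s++}\times\R^m$, positivity on $[t_0,\alpha)$ is definitional; there is no separate ``constrained problem'' to worry about. What your comparison argument $\dot x_i\ge -M x_i^{2\gamma}$ actually establishes is that the trajectory cannot reach the face $\{x_i=0\}$ at any finite time---which is precisely the content of the later Theorem~\ref{alpha+infty} ($\alpha=+\infty$), and indeed the paper uses essentially the same differential-inequality idea there (in the slightly simpler form $|\dot x_i|\le L x_i$, available once boundedness of $(x(t),y(t))$ has been proved via the potential function). So you have anticipated a later step rather than filled a gap in the present one; your version has the minor advantage of not needing global boundedness first, at the cost of handling the exponent $2\gamma$ explicitly.
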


\begin{proof}
	By Assumption \ref{fxinC2}, the right-hand side of the ODE system (\ref{odeipalm}) is locally Lipschitz continuous on $\R^n_{s++} \times {\R}^m$. From Theorem IV.1.2 in \cite{Bourbaki04},  there exists a unique solution $(x(t), y(t))$ for the ODE system (\ref{odeipalm}) on
	the maximal existence interval $[t_0, \alpha)$, for some $\alpha > t_0$ or $\alpha=+\infty$ such that $(x(t), y(t)) \in \R^n_{s++} \times {\R}^m$. Since $x(t) \in \R^n_{s++}$, $x_i(t)>0$ for $i=1,\dots,s$ on the existence interval.
	The proof is completed.
\end{proof}

Later in this section, it will be shown that $\alpha =+\infty $ (Theorem \ref{alpha+infty}).
To simplify the presentation, in the remaining of this paper, $x(t)$ (or $U(t)$) and $y(t)$ will be replaced by $x$ (or $U$) and $y$, respectively, whenever no confusion would occur.

\vskip 2mm

The next three lemmas lay the foundation for our potential function which will be introduced in (\ref{VxyDef}). Lemmas \ref{g(x)} and \ref{g(x)lambda} are Lemmas 10 and 11 in \cite{Qian18}, respectively.

\begin{lemma}\label{fconvex} (see \cite{ConvexopBoyd}) Suppose $f$ is differentiable (i.e., its gradient $\nabla f$ exists at each point in $dom f$). Then $f$ is convex if and only if $dom f$ is convex and
	\begin{equation}\label{f(x)convex}
		f(y) \geq f(x)+\nabla f(x)^T(y-x)
	\end{equation}
	holds for all $x$, $y\in dom f$.
\end{lemma}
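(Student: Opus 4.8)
The plan is to prove the two implications directly from the definition of convexity, in each case reducing the statement to the one‑dimensional behavior of $f$ along a line segment. No machinery beyond the definition of a convex function and the differentiability hypothesis is needed, so I would keep the write‑up short.

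\textbf{Necessity.} Suppose $f$ is convex. Then $\dom f$ is convex by definition, so only (\ref{f(x)convex}) needs to be shown. Fix $x,y\in\dom f$; convexity of $\dom f$ gives $x+\theta(y-x)\in\dom f$ for every $\theta\in(0,1]$, and convexity of $f$ gives $f(x+\theta(y-x)) = f((1-\theta)x+\theta y) \le (1-\theta)f(x)+\theta f(y)$, which rearranges to $\frac{f(x+\theta(y-x))-f(x)}{\theta}\le f(y)-f(x)$. Since $f$ is differentiable at $x$, the left‑hand side tends to the directional derivative $\nabla f(x)^T(y-x)$ as $\theta\downarrow 0$ (apply the chain rule to $g(\theta):=f(x+\theta(y-x))$, whose right derivative at $0$ equals $\nabla f(x)^T(y-x)$), and passing to the limit yields (\ref{f(x)convex}).

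\textbf{Sufficiency.} Conversely, assume $\dom f$ is convex and that (\ref{f(x)convex}) holds for every pair of points in $\dom f$. Given $x,y\in\dom f$ and $\theta\in[0,1]$, set $z:=(1-\theta)x+\theta y\in\dom f$ and apply (\ref{f(x)convex}) twice with base point $z$: $f(x)\ge f(z)+\nabla f(z)^T(x-z)$ and $f(y)\ge f(z)+\nabla f(z)^T(y-z)$. Multiplying the first inequality by $1-\theta$, the second by $\theta$, and adding, the gradient contributions collapse to $\nabla f(z)^T((1-\theta)x+\theta y-z)=0$, leaving $(1-\theta)f(x)+\theta f(y)\ge f(z)$; that is, $f$ is convex.

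The only step that is more than cosmetic is the limit $\theta\downarrow 0$ in the necessity direction, where differentiability is what lets us identify the limit of the difference quotient with the inner product $\nabla f(x)^T(y-x)$; everything else is elementary algebra and I would present it tersely. Since this is a classical fact, it would also be entirely legitimate to simply cite \cite{ConvexopBoyd} in lieu of reproducing the argument.
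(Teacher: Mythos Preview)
Your proof is correct and is precisely the standard argument from Section~3.1.3 of \cite{ConvexopBoyd}; the paper itself does not reproduce any details and simply cites that reference, so your write-up is more than sufficient. Your closing remark that a bare citation would be legitimate is exactly what the paper does.
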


\begin{proof}
	See Section 3.1.3 in \cite{ConvexopBoyd}.
\end{proof}

\begin{lemma}\label{g(x)}
	Let $a$ be any positive constant. Then for any scalar $x>0$, $g(x)=x-a-a \cdot \ln\frac{x}{a}\geq 0$ and $g(x)=0$ if and only if $x=a$.
	Furthermore, $g(x) \rightarrow +\infty$ as $x \rightarrow 0^+$ or $x \rightarrow +\infty$.
\end{lemma}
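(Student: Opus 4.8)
The plan is to analyze $g$ as a smooth function of one real variable on $(0,+\infty)$ using elementary calculus. First I would compute $g'(x) = 1 - \frac{a}{x}$ and $g''(x) = \frac{a}{x^2}$. Since $g''(x) > 0$ throughout $(0,+\infty)$, the function $g$ is strictly convex, and since $g'(a) = 0$, the point $x = a$ is its unique global minimizer. A direct substitution gives $g(a) = a - a - a\ln 1 = 0$, which yields $g(x) \ge g(a) = 0$ for all $x > 0$, with equality if and only if $x = a$ by strict convexity. Equivalently, and without invoking convexity, one can note that $g'$ is negative on $(0,a)$ and positive on $(a,+\infty)$, so $g$ strictly decreases on $(0,a]$ and strictly increases on $[a,+\infty)$, giving the same conclusion about the location and value of the minimum.

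For the behaviour near the endpoints, as $x \to 0^+$ we have $-a\ln\frac{x}{a} = -a\ln x + a\ln a \to +\infty$ while $x - a \to -a$ stays bounded, hence $g(x) \to +\infty$. As $x \to +\infty$, rewriting $g(x) = x\left(1 - \frac{a\ln(x/a)}{x}\right) - a$ and using the standard limit $\frac{\ln(x/a)}{x} \to 0$ shows that the parenthesized factor tends to $1$, so the leading term $x$ dominates and again $g(x) \to +\infty$.

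I do not anticipate any substantive obstacle here; the only points that need a little care are justifying the ``if and only if'' clause — which follows from strict convexity, or alternatively from the strict monotonicity of $g$ on each side of $x = a$ — and correctly identifying which term dominates in each of the two one-sided limits.
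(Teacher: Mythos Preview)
Your argument is correct in every detail. The paper itself does not supply a proof of this lemma; it simply records that the result is Lemma~10 in \cite{Qian18} and moves on, so there is no in-paper proof to compare against. Your elementary calculus analysis (second derivative for strict convexity, unique critical point at $x=a$ with $g(a)=0$, and the dominant-term identification at the two endpoints) is precisely the standard way to establish this fact and would serve perfectly well as a self-contained proof.
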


\begin{lemma}\label{g(x)lambda}
	Let $a$ be any positive constant and $1<r<2$. Then for any scalar $x>0$,
	$g(x)=\frac{1}{2-r}(x^{2-r}-a^{2-r})-\frac{a}{1-r}(\frac{1}{x^{r-1}}-\frac{1}{a^{r-1}})\geq 0$ and $g(x)=0$ if and only if $x=a$.
	Furthermore, $g(x) \rightarrow +\infty$ as $x \rightarrow 0^+$ or $x \rightarrow +\infty$.
\end{lemma}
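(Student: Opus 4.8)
The plan is to prove everything by elementary single-variable calculus, in exact parallel with the argument behind Lemma \ref{g(x)} (which is the formal limiting case $r\to 1$). First I would note that $g$ is differentiable on $(0,\infty)$ and that $g(a)=0$ by direct substitution. Then I would differentiate: since $\frac{d}{dx}\big[\tfrac{1}{2-r}x^{2-r}\big]=x^{1-r}$ and $\frac{d}{dx}\big[-\tfrac{a}{1-r}x^{-(r-1)}\big]=-a\,x^{-r}$ (here the identity $r-1=-(1-r)$ cancels the $(1-r)$ in the denominator), we obtain $g'(x)=x^{1-r}-a\,x^{-r}=x^{-r}(x-a)$.

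Because $x^{-r}>0$ for every $x>0$, the sign of $g'(x)$ coincides with the sign of $x-a$; hence $g'<0$ on $(0,a)$, $g'(a)=0$, and $g'>0$ on $(a,\infty)$. Thus $g$ is strictly decreasing on $(0,a]$ and strictly increasing on $[a,\infty)$, so $x=a$ is the unique global minimizer of $g$ on $(0,\infty)$ with minimum value $g(a)=0$. This yields simultaneously $g(x)\ge 0$ for all $x>0$ and $g(x)=0\iff x=a$. Equivalently, one may write $g(x)=\int_a^x t^{-r}(t-a)\,dt$ and observe that the integrand is nonnegative for $t\ge a$ and nonpositive for $t\le a$, which forces the integral to be nonnegative whether $x>a$ or $x<a$.

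For the asymptotics I would handle the two terms separately, using $0<2-r<1$, $0<r-1<1$, and the signs $\tfrac{1}{2-r}>0$, $\tfrac{1}{1-r}<0$. As $x\to+\infty$, $x^{2-r}\to+\infty$ so $\tfrac{1}{2-r}(x^{2-r}-a^{2-r})\to+\infty$, while $x^{-(r-1)}\to 0$ so $-\tfrac{a}{1-r}\big(\tfrac{1}{x^{r-1}}-\tfrac{1}{a^{r-1}}\big)\to \tfrac{a^{2-r}}{1-r}$, a finite constant; hence $g(x)\to+\infty$. As $x\to 0^+$, $x^{2-r}\to 0$ so the first term tends to the finite constant $-\tfrac{a^{2-r}}{2-r}$, while $x^{-(r-1)}\to+\infty$ and $-\tfrac{a}{1-r}>0$, so the second term tends to $+\infty$; hence $g(x)\to+\infty$.

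Since each step reduces to computing one derivative and reading off signs, I do not anticipate a genuine obstacle. The only point demanding care is the bookkeeping of the constants $\tfrac{1}{2-r}$ and $\tfrac{1}{1-r}$ — in particular that the latter is negative because $r>1$ — which is precisely what makes both boundary limits diverge to $+\infty$ rather than to $-\infty$.
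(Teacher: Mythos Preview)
Your proof is correct: the derivative computation $g'(x)=x^{-r}(x-a)$ is right, the monotonicity argument gives the nonnegativity and the uniqueness of the zero at $x=a$, and your term-by-term limit analysis for $x\to 0^+$ and $x\to+\infty$ is accurate (the key sign observation $-\tfrac{a}{1-r}>0$ is exactly what is needed). The paper itself does not prove this lemma at all --- it simply cites it as Lemma~11 in \cite{Qian18} --- so your self-contained calculus argument is a genuine addition rather than a duplication, and there is nothing to compare against.
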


Next we introduce a potential function for the ODE system (\ref{odeipalm}).
With the help of this potential function, the boundedness of the optimal solution set is no longer needed in the convergence proof of
the ODE system (\ref{odeipalm}).
Instead, only the weaker Assumption \ref{optb} is needed. In 1983, Losert and Akin \cite{Losert83} introduced a kind of potential function
for both the discrete and continuous dynamical systems in a classical model of population genetics. Their potential function
can be extended for our purpose. In \cite{Qian18}, a similar potential function is also used for the study of the generalized central paths
for problem (\ref{primal}). In order to define our potential function, we first introduce some notations.
For any $x\in \R_{s+}^n$, $B(x)=\{i\ | \ x_i > 0, \ i=1,\dots,s\}$ and $N(x)=\{i\ | \ x_i = 0, \ i=1,\dots,s\}$.
Obviously, for any $x\in \R_{s+}^n$, $B(x)\cap N(x)= \emptyset $ and $B(x)\cup N(x)= \{1, \dots, s\}$. Then the potential function $V(x, x^{\prime}, y, y^{\prime})$ for the ODE system
(\ref{odeipalm}) can be defined as
\begin{equation}\label{VxyDef}
	V(x, x^{\prime}, y, y^{\prime}) = I(x, x^{\prime}) + \frac{1}{2\sigma_2}\|y-y^{\prime}\|^2,
\end{equation}
where
$$
I(x, x^{\prime}) = \sum\limits_{i=s+1}^n \frac{1}{2}(x_i-x^{\prime}_i)^2 +   \hspace*{8cm}
$$
\begin{numcases}
	\textstyle \sum\limits_{i=1}^s(x_i-x^{\prime}_i)-\sum\limits_{i\in B(x^{\prime})}x^{\prime}_i \cdot \ln\frac{x_i}{x^{\prime}_i}& if $\gamma = \frac{1}{2},$ $B(x^{\prime}) \subseteq B(x),$ \label{potentialfun1a}\\
	\noalign{\vskip 1mm}
	\textstyle \sum\limits_{i=1}^s\frac{x_i^{2-2\gamma}-(x^{\prime}_i)^{2-2\gamma}}{2-2\gamma}&\nonumber \\
	\textstyle -\sum\limits_{i\in B(x^{\prime})}\frac{x^{\prime}_i}{1-2\gamma}\left(\frac{1}{x_i^{2\gamma-1}}-\frac{1}{(x^{\prime}_i)^{2\gamma-1}}\right)&
	if $\frac{1}{2} < \gamma<1,$ $B(x^{\prime}) \subseteq B(x),$\label{potentialfun1b}\\
	\noalign{\vskip 1mm}
	\textstyle +\infty &if $B(x^{\prime}) \nsubseteq B(x),$ \label{potentialfun1c}
\end{numcases}
and $(x, y) \in \R_{s+}^n \times {\R}^m$ is the variable, $(x^{\prime}, y^{\prime}) \in \R_{s+}^n \times {\R}^m$ is the parameter.

\vskip 2mm

A direct application of function $V(x, x^{\prime}, y, y^{\prime})$ in (\ref{VxyDef}) results in the following Theorems \ref{xytbound} and \ref{alpha+infty}.

\begin{theorem}\label{xytbound}
	Let $(x(t), y(t))$ be the solution of the ODE system (\ref{odeipalm}) on the maximal existence interval $[t_0, \alpha)$. Then $(x(t), y(t))$ is bounded on $[t_0, \alpha)$.
\end{theorem}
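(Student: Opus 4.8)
The plan is to use the potential function $V(x,x^\prime,y,y^\prime)$ from (\ref{VxyDef}) as a Lyapunov function along the trajectory, with the parameter point chosen to be $(x^*, y^*)$ for a suitable fixed pair, and then to exploit the radial-unboundedness (coercivity) of $V$ established through Lemmas \ref{g(x)} and \ref{g(x)lambda} to conclude boundedness. Concretely, I would first fix the optimal point $x^*$ guaranteed by Assumption \ref{optb}; since the trajectory starts at $x^0\in\R^n_{s++}$ with $B(x^*)\subseteq\{1,\dots,s\}=B(x^0)$, the relevant branch of $I(x(t),x^*)$ is (\ref{potentialfun1a}) or (\ref{potentialfun1b}), both of which are finite as long as $B(x^*)\subseteq B(x(t))$, which holds on $[t_0,\alpha)$ by Theorem \ref{x(t)positive}. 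For the dual part I would take $y^\prime=y^*$, a dual-feasible multiplier (or simply any fixed vector, since the argument only needs $\|y(t)-y^\prime\|^2$ to be controlled).

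Second, I would compute $\frac{d}{dt}V(x(t),x^*,y(t),y^*)$ along the ODE system (\ref{odeipalm}). The key algebraic point is that the diagonal scaling $U^2$ in $\frac{dx}{dt}$ is exactly matched by the derivative of $I(\cdot,x^*)$: for $i\le s$ the $i$-th summand of $I$ has gradient behaving like $x_i^{1-2\gamma}-x_i^*x_i^{-2\gamma}$ (the $\gamma=\tfrac12$ case being the logarithmic limit), and multiplying by $-u_i^2\,[\cdots]_i = -x_i^{2\gamma}[\cdots]_i$ produces $-(x_i-x_i^*)[\nabla f(x)+A^Ty+\sigma_1A^T(Ax-b)]_i$; for $i>s$ one gets $-(x_i-x_i^*)[\cdots]_i$ directly. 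Summing, and adding the $\frac1{2\sigma_2}\|y-y^*\|^2$ term whose derivative is $(y-y^*)^T(Ax-b)$, the penalty and multiplier cross-terms telescope and one is left with
\begin{equation}\nonumber
	\frac{d}{dt}V = -(x-x^*)^T\nabla f(x) - (x-x^*)^TA^Ty^* - \sigma_1\|Ax-b\|^2 + (Ax-b)^T(y^*-y) + (y-y^*)^T(Ax-b),
\end{equation}
where I have used $Ax^*=b$. The last two terms cancel, $(x-x^*)^TA^Ty^* = (Ax-b)^Ty^*$ which can be absorbed or shown nonpositive via dual feasibility together with convexity, and the convexity inequality (\ref{f(x)convex}) gives $-(x-x^*)^T\nabla f(x)\le f(x^*)-f(x)\le 0$ when $x$ is feasible; in general one combines $-(x-x^*)^T\nabla f(x^*)\le f(x^*)-f(x)$ with monotonicity of $\nabla f$ to get $\frac{d}{dt}V\le -\sigma_1\|Ax-b\|^2 + (\text{nonpositive terms})\le 0$. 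Hence $V(x(t),x^*,y(t),y^*)\le V(x^0,x^*,y^0,y^*)=:C_0<\infty$ for all $t\in[t_0,\alpha)$.

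Third, from $V\le C_0$ I would read off boundedness of both components. The term $\frac1{2\sigma_2}\|y(t)-y^*\|^2\le C_0$ immediately bounds $y(t)$. For $x(t)$: the tail sum $\sum_{i=s+1}^n\frac12(x_i-x_i^*)^2\le C_0$ bounds the free components; for $i\in B(x^*)$ the relevant one-variable function is exactly $g$ of Lemma \ref{g(x)} (case $\gamma=\tfrac12$) or Lemma \ref{g(x)lambda} (case $\tfrac12<\gamma<1$, with $r=2\gamma\in(1,2)$), which is nonnegative and tends to $+\infty$ as $x_i\to0^+$ or $x_i\to+\infty$, so each such $x_i(t)$ stays in a compact subset of $(0,\infty)$; and for $i\in N(x^*)$ (where $x_i^*=0$) the summand reduces to $x_i$ or $x_i^{2-2\gamma}/(2-2\gamma)$, a coercive nonnegative function of $x_i>0$, again forcing boundedness. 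Therefore $(x(t),y(t))$ is bounded on $[t_0,\alpha)$.

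\textbf{Main obstacle.} The delicate step is the derivative computation: one must verify that the branch-dependent potential $I(\cdot,x^*)$ is differentiable along the trajectory and that its gradient pairs with $U^2$ to cancel the scaling cleanly, handling the logarithmic ($\gamma=\tfrac12$) and power ($\tfrac12<\gamma<1$) cases uniformly, and one must carefully track the sign of the cross term $(Ax-b)^Ty^*$ and the inexact-feasibility version of the convexity inequality so that the whole derivative is genuinely $\le 0$ rather than merely bounded. Once $\frac{d}{dt}V\le 0$ is secured, the boundedness conclusion is an immediate consequence of the coercivity built into Lemmas \ref{g(x)} and \ref{g(x)lambda}.
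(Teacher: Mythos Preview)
Your overall architecture matches the paper's proof exactly: set $V_1(x,y)=V(x,x^*,y,y^*)$, show $\frac{d}{dt}V_1\le 0$ along the trajectory, then invoke the coercivity from Lemmas~\ref{g(x)} and~\ref{g(x)lambda}. The coercivity part of your argument is fine. The gap is precisely where you flagged it, in the sign of $\frac{d}{dt}V_1$, and the two fixes you sketch both fail.

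First, $y^*$ cannot be ``simply any fixed vector'': it must be the Lagrange multiplier for the equality constraint at $x^*$, i.e.\ part of a KKT triple $(x^*,y^*,z^*)$ as in (\ref{kktstar}). With this specific choice, $x^*$ minimizes $f(x)+\delta_{\R^n_{s+}}(x)+(y^*)^T(Ax-b)$ over $\R^n$, which yields the key inequality
\[
f(x^*)\;\le\; f(x)+(y^*)^T(Ax-b)\qquad\text{for all }x\in\R^n_{s+}.
\]
This is exactly what the paper uses (inequality (\ref{Lxystar})), and it is what replaces your attempt to use $f(x^*)\le f(x)$, which fails because $x(t)$ is \emph{not} feasible for $Ax=b$. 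Your fallback ``$-(x-x^*)^T\nabla f(x^*)\le f(x^*)-f(x)$'' has the wrong direction: convexity applied at $x^*$ gives $\ge$, not $\le$.

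Second, after your cancellation the derivative is
\[
\frac{dV_1}{dt}=(x^*-x)^T\nabla f(x)-(y^*)^T(Ax-b)-\sigma_1\|Ax-b\|^2.
\]
Applying convexity at the point $x$ (not at $x^*$) gives $(x^*-x)^T\nabla f(x)\le f(x^*)-f(x)$, and combining with the Lagrangian inequality above yields $\frac{dV_1}{dt}\le -\sigma_1\|Ax-b\|^2\le 0$. Once this is in place, your coercivity argument goes through verbatim and coincides with the paper's.
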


\begin{proof}
	We can choose the $x^*$ in Assumption \ref{optb}. Then there exist a $y^* \in {\R}^m$ and a $z^* \in \R^n$ such that $(x^*, y^*, z^*)$ satisfies the following KKT conditions for problem (\ref{primal})
	\begin{equation}\label{kktstar}
		\left\{
		\begin{array}{lll}
			Ax^*=b,\  x^*\in \R^n_{s+},\\
			(x^*)^Tz^*=0 , \ z^*\in \R^n_{s+},\\
			\nabla f(x^*) + A^Ty^* = z^*,\\
			z^*_i=0, \quad\hbox{for} \ s+1\leq i \leq n.\\
		\end{array}
		\right.
	\end{equation}
	Hence the convex function $f(x) + \delta_{\R^n_{s+}}(x) + (y^*)^T(Ax-b)$ attains the minimum at $x^*$ which implies
	\begin{equation}\label{Lxystar}
		f(x^*) \leq f(x) + (y^*)^T(Ax-b), \quad \forall x \in \R^n_{s+}.
	\end{equation}
	Since $x^* \in \R^n_{s+}$, we can define $V_1(x, y)$ as follows
	\begin{equation}\label{V1xyDef}
		V_1(x, y) = V(x, x^*, y, y^*), \quad \forall (x, y) \in \R^n_{s+} \times \R^m.
	\end{equation}
	From Theorem \ref{x(t)positive}, $V_1(x(t), y(t))$ is well defined on $[t_0, \alpha)$, and from Lemma \ref{fconvex} and (\ref{Lxystar}) we have
	\begin{eqnarray*}
		\frac{dV_1(x(t), y(t))}{dt} &=& (x-x^*)^TU^{-2}\cdot \frac{dx}{dt} + \frac{1}{\sigma_2}(y-y^*)^T\cdot \frac{dy}{dt} \\
		&=& (x^*-x)^T\left[\nabla f(x) + A^Ty + \sigma_1A^T(Ax-b) \right] + (y-y^*)^T (Ax-b) \\
		&=& (x^*-x)^T\nabla f(x) -\sigma_1\|Ax-b\|^2 - (y^*)^T(Ax-b) \\
		&\leq& f(x^*) - \left[f(x) + (y^*)^T(Ax-b)\right] - \sigma_1\|Ax-b\|^2 \\
		&\leq& - \sigma_1\|Ax-b\|^2 \leq 0,
	\end{eqnarray*}
	which indicates that
	\begin{equation}\label{V1xy}
		V_1(x(t), y(t)) \leq V_1(x^0, y^0), \ \ \forall t \in [t_0, \alpha).
	\end{equation}
	From Lemmas \ref {g(x)} and \ref{g(x)lambda}, we know that for $(x, y)\in \R^n_{s+} \times \R^m$ if $\|x\|\to +\infty$ or $\|y\| \to +\infty$,
	we must have $V_1(x, y)\rightarrow +\infty$. Hence from (\ref{V1xy}), $(x(t), y(t))$ must be bounded on $[t_0, \alpha)$.
\end{proof}

\begin{theorem}\label{alpha+infty}
	Let the maximal existence interval of the solution $(x(t), y(t))$ of the ODE system (\ref{odeipalm}) be $[t_0, \alpha)$. Then $\alpha =+\infty$.
\end{theorem}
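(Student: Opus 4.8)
The plan is to rule out finite blow-up by combining the boundedness already established in Theorem \ref{xytbound} with the maximal-interval theory for ODEs. The standard dichotomy for the maximal solution of a locally Lipschitz ODE (Theorem IV.1.2 in \cite{Bourbaki04}, as already invoked in the proof of Theorem \ref{x(t)positive}) says that if $\alpha < +\infty$, then the trajectory $(x(t),y(t))$ must eventually leave every compact subset of the domain $\R^n_{s++}\times\R^m$. Theorem \ref{xytbound} tells us $(x(t),y(t))$ stays in a bounded set, so the only way to escape every compact subset is for $x(t)$ to approach the boundary of $\R^n_{s++}$, i.e., for some index $i\in\{1,\dots,s\}$ to have $\liminf_{t\to\alpha^-} x_i(t) = 0$. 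So it suffices to show that each $x_i(t)$, $i=1,\dots,s$, stays bounded away from $0$ on $[t_0,\alpha)$ when $\alpha<+\infty$.

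The key tool for this is again the potential function $V_1(x,y)=V(x,x^*,y,y^*)$ from (\ref{V1xyDef}), together with the monotonicity estimate $\frac{dV_1}{dt}\le 0$ derived in the proof of Theorem \ref{xytbound}, which gives $V_1(x(t),y(t))\le V_1(x^0,y^0)$ for all $t\in[t_0,\alpha)$. The crucial point is the choice of $x^*$: since $x^0\in\R^n_{s++}$, we have $B(x^*)\subseteq B(x^0)$ is \emph{not} automatic, but the monotonicity already forces $V_1$ finite along the trajectory, hence (by clause (\ref{potentialfun1c})) $B(x^*)\subseteq B(x(t))$ for all $t$. For the indices $i\in B(x^*)$, the relevant summand of $I(x(t),x^*)$ is, up to the shift by the constant $-x_i^*\ln x_i^* $ or its analogue, exactly the function $g$ of Lemma \ref{g(x)} (when $\gamma=\tfrac12$) or Lemma \ref{g(x)lambda} (when $\tfrac12<\gamma<1$, with $r=2\gamma$), which blows up to $+\infty$ as $x_i\to 0^+$. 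Since all summands of $I(x(t),x^*)$ are nonnegative and $V_1$ is bounded above by $V_1(x^0,y^0)$, each such $g(x_i(t))$ is bounded, which forces $x_i(t)\ge\varepsilon$ for some $\varepsilon>0$ uniformly on $[t_0,\alpha)$. For indices $i\in N(x^*)$, i.e., $x_i^*=0$, there is no logarithmic/negative-power term, but we should double-check that the corresponding summand $x_i(t)$ (in the $\gamma=\tfrac12$ case) or $x_i(t)^{2-2\gamma}/(2-2\gamma)$ (in the $\tfrac12<\gamma<1$ case) only controls an upper bound on $x_i(t)$, not a lower one — so the lower bound for those indices has to come purely from the fact that the right-hand side of the $x$-equation is bounded on a bounded set and $\alpha$ is finite, hence $x_i$ is Lipschitz near $\alpha$ and cannot reach $0$ in finite time unless it was heading there; more carefully, one can note that if $\liminf_{t\to\alpha^-}x_i(t)=0$ for such an $i$, boundedness of $\dot x$ gives a contradiction with finiteness of $\alpha$ only if $x_i$ is also bounded below away from $0$ — so the honest argument is: $\dot x = -U^2[\cdots]$ and on the bounded set the bracket is bounded, while $U^2$ has $i$-th entry $x_i^{2\gamma}\le (\text{const})$, so $|\dot x_i|\le C$, hence $x_i(t)$ extends continuously to $t=\alpha$; combined with $x_i(t)>0$ on $[t_0,\alpha)$ this only gives $x_i(\alpha)\ge 0$, which is not enough. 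The clean fix is to observe that near $t=\alpha$, if $x_i(t)\to 0$, then for $t$ close to $\alpha$ the bracket's $i$-th component is $\partial_i f(x) + (A^Ty)_i + \sigma_1(A^T(Ax-b))_i$, which is bounded, so $\dot x_i = -x_i^{2\gamma}\cdot O(1)$, giving $|\dot x_i|\le C x_i^{2\gamma}$ with $2\gamma\ge 1$; for $2\gamma=1$ this is a linear differential inequality and Grönwall gives $x_i(t)\ge x_i(t')e^{-C(t-t')}>0$, for $2\gamma>1$ the solution of $\dot v = -Cv^{2\gamma}$ from a positive value stays positive for all finite time — either way $x_i$ cannot hit $0$ at the finite time $\alpha$.

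The main obstacle, and the place requiring care, is this last point: for the indices $i\in N(x^*)$ the potential function gives no barrier, so one must argue the lower bound directly from the structure of the vector field, using that $2\gamma\ge 1$ makes $\dot x_i$ vanish at least linearly in $x_i$. Once both cases are handled, we conclude that the closure of the trajectory in $\R^n_{s++}\times\R^m$ is compact, contradicting the escape-from-compacts characterization of a finite maximal interval; therefore $\alpha=+\infty$.
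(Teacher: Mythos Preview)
Your proposal is correct, and the decisive step---the differential inequality $|\dot x_i|\le C\,x_i^{2\gamma}$ for $i\le s$, which prevents $x_i$ from reaching $0$ in finite time because $2\gamma\ge 1$---is exactly the mechanism the paper uses. The difference is that the paper applies this argument uniformly to \emph{all} indices $i\in\{1,\dots,s\}$, without ever invoking the potential function $V_1$ as a barrier. Concretely, the paper observes that on the bounded set guaranteed by Theorem~\ref{xytbound}, the bracket $[\nabla f(x)+A^Ty+\sigma_1 A^T(Ax-b)]_i$ is bounded, and since $x_i^{2\gamma}=x_i^{2\gamma-1}\cdot x_i\le M^{2\gamma-1}x_i$ (here $2\gamma-1\ge 0$ and $x_i\le M$), one gets the linear inequality $|\dot x_i|\le L x_i$ directly; integrating $\frac{d}{dt}\ln x_i\ge -L$ then gives $x_i(t)\ge x_i(t_0)e^{-L(t-t_0)}$, which cannot vanish at a finite $\alpha$.

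Your detour through $V_1$ for the indices $i\in B(x^*)$ works but is unnecessary: the same Gr\"onwall bound covers those indices too, so the case split $B(x^*)$ versus $N(x^*)$ can be dropped entirely. Likewise, your separate treatment of $2\gamma=1$ versus $2\gamma>1$ is avoidable once you absorb the factor $x_i^{2\gamma-1}\le M^{2\gamma-1}$ into the constant. What your route does buy is an independent confirmation, via Lemmas~\ref{g(x)} and~\ref{g(x)lambda}, that the coordinates in $B(x^*)$ are bounded below---a fact the paper exploits later (in Theorem~\ref{strongcon1}(b)) but does not need here.
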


\begin{proof}
	Assume $\alpha \neq +\infty$. From Theorem \ref{xytbound}, we know that there exists an $M>0$ such that $\|x(t)\|\leq M$ and $\|y(t)\|\leq M$ for all  $t \in [t_0, \alpha)$.
	Furthermore, from Assumption \ref{fxinC2}, we know that there exists an $L>0$ such that for every $i \in \{1, \ldots, s\}$, we have
	\begin{equation}\label{dxi1}
		\left|\frac{dx_i}{dt}\right| \leq Lx_i, \quad \forall t\in [t_0, \alpha),
	\end{equation}
	and for any $i \in \{s+1, \ldots, n\}$, $j \in \{1, \ldots, m\}$, we have
	\begin{equation}\label{dxi2}
		\left|\frac{dx_i}{dt}\right| \leq L \quad \mbox{and} \quad \left|\frac{dy_j}{dt}\right| \leq L \quad \forall t\in [t_0, \alpha).
	\end{equation}
	For any $i \in \{1, \ldots, n\}$, from inequalities (\ref{dxi1}), (\ref{dxi2}), and $\|x(t)\|\leq M$, we know that (without loss of generality we assume $M\geq 1$)
	\begin{equation}\label{dxiLM}
		\left|\frac{dx_i}{dt}\right| \leq LM \quad \forall t\in [t_0, \alpha),
	\end{equation}
	furthermore, $x(t)$ is continuous on $[t_0, \alpha)$,  and it is not hard to see that $\lim\limits_{t\rightarrow {\alpha}^-}x(t)$ exists. We denote this limit as $x(\alpha)$.
	Evidently $x(\alpha) \in \R^n_{s+}$. According to the Extension Theorem in $\S$2.5, \cite{Anosov88}, we know that the solution $(x(t), y(t))$ will go to the boundary of the open set $(0, +\infty)\times \R^n_{s++}\times \R^m$.
	But because of the hypothesis, $\alpha \neq +\infty$, and $(x(t), y(t))$ is bounded, so there must exist at least one $i \in \{1, \ldots, s\}$ such that $x_i(\alpha)=0$. From  inequality (\ref{dxi1}),
	we know that if $t\in [t_0, \alpha)$,
	$$
	\frac{dx_i}{x_i} \geq -Ldt.
	$$

	Integrating the above inequality, we have for every $t\in [t_0, \alpha)$
	$$
	\ln x_i(t)-\ln x_i(t_0)\geq -L(t-t_0).
	$$
	Since $x_i(t) \rightarrow x_i(\alpha)=0$ as $t\rightarrow \alpha ^-$, $\ln x_i(t)-\ln x_i(t_0)\rightarrow -\infty$ as $t\rightarrow \alpha ^-$, but $-L(t-t_0)\geq -L(\alpha-t_0)$.
	This is a contradiction.  Thus $\alpha=+\infty$ and the proof is completed.
\end{proof}

From Theorem \ref{alpha+infty}, we can define the limit set for the solution of the ODE system (\ref{odeipalm}). Let $(x(t), y(t))$ be the solution of the ODE system (\ref{odeipalm}), the limit set of $\{(x(t), y(t))\}$
can be defined as follows
\begin{eqnarray*}
	{\Omega}^1 (x^0, y^0)&=&\left\{ (p_1, p_2)\in {{\R}^n \times {\R}^m} \mid \exists \ \{t _k\}_{k=0}^{+\infty} \ {\rm with} {\displaystyle \lim_{k\to +\infty }t_k = +\infty} \right.\\
	&&\left. \mbox{ such that }
	{\displaystyle \lim_{k\to +\infty }}(x(t_k), y(t_k)) = (p_1, p_2) \right\}.
\end{eqnarray*}

\section{Optimality of the cluster point(s)}

From Theorem \ref{xytbound}, we know that the limit set ${\Omega}^1 (x^0, y^0)$ is nonempty. In this section, we will show that for any $(x^{(1)}, y^{(1)}) \in {\Omega}^1 (x^0, y^0)$, $x^{(1)}$ is an optimal solution for problem (\ref{primal}). First we need the following lemma.

\begin{lemma}(Barbalat's Lemma \cite{Slotine91})
	If the differentiable function $f(t)$ has a
	finite limit as $t\rightarrow+\infty $, and $\dot{f}$ is uniformly continuous, then $\dot{f}\rightarrow 0$  as $t\rightarrow+\infty$.
\end{lemma}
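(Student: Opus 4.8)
The plan is to argue by contradiction, following the classical argument for Barbalat's Lemma. Denote by $L$ the finite limit $\lim_{t\to+\infty} f(t)$. Suppose, contrary to the claim, that $\dot f(t)$ does not tend to $0$ as $t\to+\infty$. Then there exist a constant $\varepsilon_0>0$ and a strictly increasing sequence $t_k\to+\infty$ (with, say, $t_{k+1}>t_k+1$) such that $|\dot f(t_k)|\geq\varepsilon_0$ for every $k$.

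The next step is to use the uniform continuity of $\dot f$ to promote this pointwise information to information on a whole interval. I would choose $\delta\in(0,1)$ such that $|\dot f(s)-\dot f(s')|<\varepsilon_0/2$ whenever $|s-s'|\leq\delta$. Then for every $k$ and every $s\in[t_k,t_k+\delta]$ one has $|\dot f(s)|\geq|\dot f(t_k)|-\varepsilon_0/2\geq\varepsilon_0/2>0$. Applying the mean value theorem to $f$ on $[t_k,t_k+\delta]$ gives a point $\xi_k\in(t_k,t_k+\delta)$ with $f(t_k+\delta)-f(t_k)=\delta\,\dot f(\xi_k)$, whence
\[
\bigl|f(t_k+\delta)-f(t_k)\bigr| \;=\; \delta\,|\dot f(\xi_k)| \;\geq\; \frac{\varepsilon_0\delta}{2} \;>\;0 .
\]
On the other hand, since $t_k\to+\infty$ and $t_k+\delta\to+\infty$, the convergence $f(t)\to L$ forces $f(t_k+\delta)-f(t_k)\to L-L=0$, which contradicts the uniform lower bound $\varepsilon_0\delta/2$. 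Hence $\dot f(t)\to 0$ as $t\to+\infty$. Equivalently, one may avoid the mean value theorem and instead note that $\dot f$, being continuous, keeps a constant sign on $[t_k,t_k+\delta]$, so $\bigl|f(t_k+\delta)-f(t_k)\bigr|=\bigl|\int_{t_k}^{t_k+\delta}\dot f(s)\,ds\bigr|=\int_{t_k}^{t_k+\delta}|\dot f(s)|\,ds\geq\varepsilon_0\delta/2$, and conclude in the same way.

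I do not anticipate a real difficulty here; the single delicate point is the passage from a lower bound on $|\dot f|$ to a lower bound on the increment $|f(t_k+\delta)-f(t_k)|$. This is precisely where uniform continuity (rather than mere continuity) of $\dot f$ is indispensable: it guarantees that the interval length $\delta$ can be chosen \emph{independently of $k$}, so that the increments do not shrink as $k\to+\infty$. In the integral formulation, uniform continuity also rules out a sign change of $\dot f$ inside $[t_k,t_k+\delta]$, which is what lets one replace $\bigl|\int\dot f\bigr|$ by $\int|\dot f|$.
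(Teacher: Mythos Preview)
Your argument is correct and is precisely the classical proof of Barbalat's Lemma. Note, however, that the paper does not supply its own proof of this lemma: it is stated with a citation to \cite{Slotine91} and used as a black box, so there is no proof in the paper to compare against.
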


\begin{theorem}\label{optimality1}
	For any $(x^{(1)}, y^{(1)}) \in {\Omega}^1(x^0, y^0)$, $x^{(1)}$ is an optimal solution for problem (\ref{primal}).
\end{theorem}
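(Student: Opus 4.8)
The plan is to use the Lyapunov-type function $V_1(x,y)=V(x,x^*,y,y^*)$ introduced in~(\ref{V1xyDef}) in the proof of Theorem~\ref{xytbound}, where $x^*$ is the optimal point from Assumption~\ref{optb} and $(y^*,z^*)$ are the associated KKT multipliers in~(\ref{kktstar}). That proof already shows that along the solution trajectory
$$
\frac{dV_1(x(t),y(t))}{dt}\;\le\;-E(t)-\sigma_1\|Ax(t)-b\|^2\;\le\;0,\qquad E(t):=f(x(t))+(y^*)^T(Ax(t)-b)-f(x^*),
$$
and $E(t)\ge 0$ by~(\ref{Lxystar}). Since $V_1(x(t),y(t))$ is bounded below by $0$ — the term $I(x,x^*)$ is nonnegative by Lemmas~\ref{g(x)} and~\ref{g(x)lambda} (along the trajectory $x(t)\in\R^n_{s++}$, so $B(x^*)\subseteq B(x(t))$ and we are always in the finite branches~(\ref{potentialfun1a})--(\ref{potentialfun1b})) and the remaining term is a squared norm — it is monotone nonincreasing and hence converges as $t\to+\infty$.

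Next I would invoke Barbalat's Lemma applied to $V_1(x(t),y(t))$. Using the ODE~(\ref{odeipalm}) its derivative has the closed form $\frac{dV_1}{dt}=(x^*-x)^T[\nabla f(x)+A^Ty+\sigma_1A^T(Ax-b)]+(y-y^*)^T(Ax-b)$, which is $C^1$ in $(x,y)$ because $f\in C^2$. By Theorems~\ref{xytbound} and~\ref{alpha+infty} the trajectory stays, for all $t\ge t_0$, in a fixed compact subset $K\subseteq\R^n_{s+}\times\R^m$; on $K$ the quantities $\nabla f$, $\nabla^2 f$, $U$, and therefore $dx/dt$ and $dy/dt$, are all bounded, so $d^2V_1/dt^2$ is bounded, i.e.\ $dV_1/dt$ is uniformly continuous on $[t_0,+\infty)$. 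Barbalat's Lemma then gives $dV_1/dt\to 0$, and combining this with the two-sided estimate above (a squeeze, using $E(t)\ge 0$ and $\sigma_1\|Ax-b\|^2\ge 0$) forces $E(t)\to 0$ and $\|Ax(t)-b\|\to 0$ as $t\to+\infty$.

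Finally I would pass to the limit. Take $(x^{(1)},y^{(1)})\in\Omega^1(x^0,y^0)$ with $(x(t_k),y(t_k))\to(x^{(1)},y^{(1)})$ for some $t_k\to+\infty$. Each $x(t_k)\in\R^n_{s++}\subseteq\R^n_{s+}$ and $\R^n_{s+}$ is closed, so $x^{(1)}\in\R^n_{s+}$; and $Ax^{(1)}-b=\lim_k(Ax(t_k)-b)=0$, so $x^{(1)}\in{\cal P^+}$ is feasible for~(\ref{primal}). From $\|Ax(t_k)-b\|\to 0$ we get $(y^*)^T(Ax(t_k)-b)\to 0$, so $E(t_k)\to 0$ together with the continuity of $f$ yields $f(x^{(1)})=f(x^*)$, which is the optimal value of~(\ref{primal}) by Assumption~\ref{optb}. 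Hence $x^{(1)}$ is a feasible point attaining the optimal value, i.e.\ an optimal solution of~(\ref{primal}).

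The step I expect to be the main obstacle is the uniform-continuity verification in the second paragraph: one has to check carefully that $dx/dt=-U^2[\,\cdots]$ and its time derivative stay bounded on the whole infinite interval, and this is exactly where the restriction $\frac12\le\gamma<1$ and the boundedness of $x(t)$ from Theorem~\ref{xytbound} enter, since $U^2=\operatorname{diag}(x_i^{2\gamma})$ with $2\gamma-1\ge 0$ keeps $\frac{d}{dt}U^2$ under control. An alternative that avoids differentiating $V_1$ twice is to integrate $\frac{dV_1}{dt}\le -E(t)-\sigma_1\|Ax-b\|^2$ to obtain $\int_{t_0}^{+\infty}E(t)\,dt<+\infty$ and $\int_{t_0}^{+\infty}\|Ax(t)-b\|^2\,dt<+\infty$, then apply Barbalat's Lemma to the (convergent, monotone) antiderivatives of the uniformly continuous integrands $E(t)$ and $\|Ax(t)-b\|^2$.
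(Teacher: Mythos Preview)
Your proposal is correct and follows essentially the same route as the paper: the Lyapunov function $V_1$, Barbalat's Lemma for $dV_1/dt$, and the squeeze via convexity and~(\ref{Lxystar}) to force $Ax^{(1)}=b$ and $f(x^{(1)})=f(x^*)$. The only simplification you miss is that after using $Ax^*=b$ the cross terms cancel and $\frac{dV_1}{dt}=(x^*-x)^T\nabla f(x)-\sigma_1\|Ax-b\|^2-(y^*)^T(Ax-b)$ depends on $x$ alone, so the uniform-continuity check you flagged as the main obstacle reduces to a Lipschitz bound in $x$ together with the already-established estimate~(\ref{dxiLM}) on $\|dx/dt\|$, with no need to control $\frac{d}{dt}U^2$ or $dy/dt$.
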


\begin{proof}
	We can choose the $x^*$ in Assumption \ref{optb}, and $y^*$ as the corresponding Lagrange multiplier for constraint $Ax=b$ (see the proof of Theorem \ref{xytbound}). Note that when $i \in N(x^*)$, we have $x_i^*=0$. From Lemmas \ref{g(x)} and \ref{g(x)lambda}, it is easy to see that $V_1(x, y)\geq 0$ for all $(x, y)\in \R^n_{s++} \times {\R}^m$.
	Therefore, for all $t\in [t_0, +\infty)$, $V_1(x(t), y(t))$ is bounded below. This along with the fact that $\frac{dV_1(x(t), y(t))}{dt}\leq 0$ implies that $V_1(x(t), y(t))$ has a finite limit as $t\rightarrow+\infty $.
	
	\vskip 2mm
	
	From Theorem \ref{xytbound}, we know that the solution $(x(t), y(t))$ of the ODE system (\ref{odeipalm}) is contained in the bounded closed set
	$C=\{(x, y)\in \R^n \times \R^m | \|x\|\leq M, \ \|y\|\leq M \}$ for some $M>0$.
	From the proof of Theorem \ref{xytbound}, we have
	\begin{equation}\label{dv1t}
		\frac{dV_1(x(t), y(t))}{dt} = (x^*-x)^T\nabla f(x) -\sigma_1\|Ax-b\|^2 - (y^*)^T(Ax-b),
	\end{equation}
	which is continuously differentiable with respect to $x$ according to Assumption \ref{fxinC2}, hence when $C$ is compact, there must exist a constant $L_1>0$ such that
	$$
	\left|\frac{d V_1(x(t), y(t))}{dt}\mid_{t=t_1} - \frac{d V_1(x(t), y(t))}{dt}\mid_{t=t_2}\right| \leq L_1\|x(t_1) - x(t_2)\| = L_1 \| \int_{t_1}^{t_2} \frac{dx}{dt}dt \|.
	$$
	Using inequality (\ref{dxiLM}), we have
	$$
	\left|\frac{d V_1(x(t), y(t))}{dt}\mid_{t=t_1}-\frac{d V_1(x(t), y(t))}{dt}\mid_{t=t_2}\right| \leq \sqrt{n}L_1LM |t_1-t_2 |,
	$$
	thus $\frac{dV_1(x(t), y(t))}{dt}$ is uniformly continuous with respect to $t$. Hence from Barbalat's Lemma and (\ref{dv1t}), we have that
	\begin{equation}\label{limit0}
		\lim\limits_{t\rightarrow +\infty}(x^*-x)^T\nabla f(x) -\sigma_1\|Ax-b\|^2 - (y^*)^T(Ax-b)=0.
	\end{equation}
	For any $(x^{(1)}, y^{(1)}) \in {\Omega}^1(x^0, y^0)$, from the definition of ${\Omega}^1(x^0, y^0)$, we know that there exists a sequence $\{t_k\}_0^{+\infty}$
	with $t_k\rightarrow +\infty$ as $k\rightarrow +\infty$
	such that $x(t_k)\rightarrow x^{(1)}$ and $y(t_k) \to y^{(1)}$ as $k\rightarrow +\infty$. Then since $(x^*-x)^T\nabla f(x) -\sigma_1\|Ax-b\|^2 - (y^*)^T(Ax-b)$ is continuous at $x^{(1)}$,
	from (\ref{f(x)convex}), (\ref{Lxystar}), and (\ref{limit0}), we have
	\begin{eqnarray*}
		0 &=& \lim\limits_{t\rightarrow +\infty}(x^*-x)^T\nabla f(x) -\sigma_1\|Ax-b\|^2 - (y^*)^T(Ax-b) \\
		&=& \lim\limits_{k\rightarrow +\infty}(x^*-x(t_k))^T\nabla f(x(t_k)) -\sigma_1\|Ax(t_k)-b\|^2 - (y^*)^T(Ax(t_k)-b) \\
		&=& (x^*-x^{(1)})^T\nabla f(x^{(1)}) -\sigma_1\|Ax^{(1)}-b\|^2 - (y^*)^T(Ax^{(1)}-b) \\
		&\leq& f(x^*) - \left[ f(x^{(1)}) + (y^*)^T(Ax^{(1)}-b) \right] - \sigma_1\|Ax^{(1)} -b \|^2 \\
		&\leq& - \sigma_1\|Ax^{(1)} -b \|^2 \leq 0,
	\end{eqnarray*}
	which implies $Ax^{(1)} = b$ and
	$$
	f(x^*) = f(x^{(1)}) + (y^*)^T(Ax^{(1)}-b) = f(x^{(1)}).
	$$
	Furthermore, Theorem \ref{x(t)positive} and the definition of $\Omega^1(x^0, y^0)$, $x^{(1)}_i \geq 0$ for $i=1, ..., s$.
	Hence $x^{(1)}$ is an optimal solution for problem (\ref{primal}). Thus the theorem is proved.
\end{proof}

\section{Convergence of the solution trajectory of the ODE system (\ref{odeipalm})}
Now, it comes to the key results of the paper. To simplify the notation, we define
\begin{equation}\label{zxyDef}
	z(x, y) = \nabla f(x) + A^Ty + \sigma_1A^T(Ax-b),
\end{equation}
and $z(t) = z(x(t), y(t))$, where $(x(t), y(t))$ is the solution of the ODE system (\ref{odeipalm}). Let $S = \{1,\dots,s \}$ and ${\bar S} = \{ s+1,\dots,n \}$. Theorem \ref{strongcon1} below shows that $x(t)$ converges as $t \to +\infty $, and under the strict complementarity condition, $(y(t), z(t)_{S})$ converges to an optimal solution of the dual problem (\ref{dual}). First we need the weak convergence of the ODE system (\ref{odeipalm}) and the following lemma.

\begin{lemma} (\cite{Mangasaria88}){\label{gc}}
	Let $f: \R^n\rightarrow \R$ be a twice continuously differentiable  convex function. If $f(\cdot)$ is constant on a convex
	set $\bar{\Omega}\in \R^n$ then $\nabla f(\cdot)$ is constant on $\bar{\Omega} $.
\end{lemma}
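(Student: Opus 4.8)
The plan is to reduce the statement to a one-dimensional claim along an arbitrary segment contained in $\bar{\Omega}$. Fix two points $x,y\in\bar{\Omega}$. Since $\bar{\Omega}$ is convex, the segment $z(t)=x+t(y-x)$, $t\in[0,1]$, lies in $\bar{\Omega}$, so by hypothesis the scalar function $\phi(t)=f(z(t))$ is constant on $[0,1]$. Because $f\in C^2$, we may differentiate twice, obtaining $\phi'(t)=\nabla f(z(t))^T(y-x)\equiv 0$ and $\phi''(t)=(y-x)^T\nabla^2 f(z(t))(y-x)\equiv 0$ on $[0,1]$.

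The crucial step is to upgrade the scalar identity $\phi''\equiv 0$ to a vector identity. Convexity of $f$ gives $\nabla^2 f(z)\succeq 0$ for every $z$, and for a positive semidefinite matrix $H$ the condition $v^THv=0$ forces $Hv=0$ (write $H=B^TB$; then $\|Bv\|^2=0$, so $Bv=0$ and hence $Hv=B^TBv=0$; equivalently, use Cauchy--Schwarz for the semidefinite form). Applying this with $H=\nabla^2 f(z(t))$ and $v=y-x$ yields $\nabla^2 f(z(t))(y-x)=0$ for every $t\in[0,1]$. I expect this to be the only point requiring a moment's thought; the remainder is routine.

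It then remains to integrate. The vector-valued map $t\mapsto\nabla f(z(t))$ is $C^1$ with derivative $\nabla^2 f(z(t))(y-x)$, which we have just shown vanishes identically on $[0,1]$. Hence $\nabla f(y)-\nabla f(x)=\int_0^1\nabla^2 f(z(t))(y-x)\,dt=0$, i.e., $\nabla f(x)=\nabla f(y)$. Since $x$ and $y$ were arbitrary in $\bar{\Omega}$, $\nabla f$ is constant on $\bar{\Omega}$, which is the assertion. (A purely first-order argument combining $\phi'\equiv 0$ with monotonicity of $\nabla f$ and Lemma \ref{fconvex} pins down only one linear functional of each gradient and does not suffice when $\bar{\Omega}$ is lower-dimensional, so the $C^2$ hypothesis is genuinely used here.)
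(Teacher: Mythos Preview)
Your argument is correct. The paper does not give its own proof of this lemma; it merely cites \cite{Mangasaria88} and uses the result as a black box. Your proposal therefore supplies more than the paper does: a self-contained argument that reduces to a one-dimensional problem along an arbitrary segment in $\bar{\Omega}$, uses positive semidefiniteness of $\nabla^2 f$ to upgrade the scalar identity $(y-x)^T\nabla^2 f(z(t))(y-x)=0$ to the vector identity $\nabla^2 f(z(t))(y-x)=0$, and then integrates. Each step is sound, and your remark that the $C^2$ hypothesis is genuinely needed (a first-order monotonicity argument only controls the component of the gradient along $y-x$) is a nice observation.
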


\begin{theorem}\label{weakcon1}
	Let $(x(t), y(t))$ be the solution of the ODE system (\ref{odeipalm}). Then
	$$
	\lim\limits_{t\to +\infty} U^{2}\left[\nabla f(x) + A^Ty + \sigma_1A^T(Ax-b)\right] = 0,
	$$
	and
	$$
	\lim\limits_{t\to +\infty} Ax-b = 0.
	$$
\end{theorem}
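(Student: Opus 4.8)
The plan is to derive both limits from Barbalat's Lemma, after first establishing that the two functions $t\mapsto\|Ax(t)-b\|^{2}$ and $t\mapsto\|U(t)z(t)\|^{2}$, where $z(t)=z(x(t),y(t))$ is as in (\ref{zxyDef}), are integrable over $[t_{0},+\infty)$. The first integrability is essentially contained in the proof of Theorem~\ref{optimality1}: the function $V_{1}(x(t),y(t))$ of (\ref{V1xyDef}) is nonincreasing, bounded below by Lemmas~\ref{g(x)} and \ref{g(x)lambda}, hence convergent, and satisfies $\frac{dV_{1}}{dt}\le-\sigma_{1}\|Ax-b\|^{2}$; integrating over $[t_{0},+\infty)$ gives $\int_{t_{0}}^{+\infty}\|Ax(t)-b\|^{2}\,dt\le\sigma_{1}^{-1}\big(V_{1}(x^{0},y^{0})-\lim_{t\to+\infty}V_{1}(x(t),y(t))\big)<+\infty$. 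I also record that, by Theorem~\ref{xytbound} and Assumption~\ref{fxinC2}, the trajectory lies in a bounded subset of $\R^{n}_{s++}\times\R^{m}$, so $\nabla f(x)$, $\nabla^{2}f(x)$, $y$, $Ax-b$, $U$ and $z$ are all bounded along it, and $\|\frac{dx}{dt}\|$ is bounded as in (\ref{dxiLM}).

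The second integrability is the heart of the matter. Consider the \emph{running augmented Lagrangian} $\Phi(t):=f(x(t))+y(t)^{T}(Ax(t)-b)+\frac{\sigma_{1}}{2}\|Ax(t)-b\|^{2}$, which equals $\tilde{\mathcal{L}}_{\sigma_{1}}$ from (\ref{auglagrangianf}) evaluated along the trajectory (the indicator term in (\ref{auglagrangianf}) vanishes since $x(t)\in\R^{n}_{s++}$). Since the $x$-gradient of $\Phi$'s integrand is exactly $z(x,y)$ from (\ref{zxyDef}) and its $y$-gradient is $Ax-b$, the chain rule and the ODE system (\ref{odeipalm}) give
$$\frac{d\Phi}{dt}=z(t)^{T}\frac{dx}{dt}+(Ax-b)^{T}\frac{dy}{dt}=-z(t)^{T}U^{2}z(t)+\sigma_{2}\|Ax-b\|^{2}=-\|U(t)z(t)\|^{2}+\sigma_{2}\|Ax-b\|^{2}.$$
Because $\Phi$ is bounded along the trajectory (boundedness of $(x(t),y(t))$ and continuity of $f$), integrating this identity from $t_{0}$ to $T$ and using the first bound yields $\int_{t_{0}}^{+\infty}\|U(t)z(t)\|^{2}\,dt<+\infty$, uniformly in $T$.

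With both functions integrable, it remains to check they are uniformly continuous on $[t_{0},+\infty)$, after which Barbalat's Lemma forces $\|Ax(t)-b\|^{2}\to0$ and $\|U(t)z(t)\|^{2}\to0$; then $Ax-b\to0$ and $U(t)z(t)\to0$, and since $U(t)$ is bounded, $U^{2}z=U\cdot(Uz)\to0$, which in view of (\ref{zxyDef}) is the first asserted limit. Uniform continuity follows from boundedness of the time derivatives: $\frac{d}{dt}(Ax-b)=A\frac{dx}{dt}$ is bounded, and $\frac{d}{dt}(Uz)=\dot U z+U\dot z$ is bounded since $\dot z=\nabla^{2}f(x)\frac{dx}{dt}+\sigma_{2}A^{T}(Ax-b)-\sigma_{1}A^{T}AU^{2}z$ is bounded, and $\dot U=\operatorname{diag}(\dot u)$ has entries $\dot u_{i}=\gamma x_{i}^{\gamma-1}\frac{dx_{i}}{dt}=-\gamma x_{i}^{3\gamma-1}z_{i}$ for $i\le s$ (and $0$ for $i>s$), which stay bounded precisely because $3\gamma-1\ge 1/2>0$ and $x_{i}$ is bounded; this is the one place where the standing restriction $\frac12\le\gamma<1$ is genuinely used.

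The hard part will be the middle step: recognizing that the right potential for controlling the ``$\nabla f$ part'' of the dynamics is the running augmented Lagrangian $\Phi$, so that $\frac{d\Phi}{dt}$ collapses exactly to $-\|Uz\|^{2}+\sigma_{2}\|Ax-b\|^{2}$ and the whole argument is then driven by the already-known integrability of $\|Ax-b\|^{2}$; a secondary technical point is the boundary bookkeeping that keeps $\dot U$, and hence $\frac{d}{dt}(Uz)$, bounded near $\partial\R^{n}_{s++}$, which is exactly what the range $\frac12\le\gamma<1$ is designed to permit.
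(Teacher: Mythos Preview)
Your proof is correct and rests on the same two pillars as the paper's: the derivative identity
\[
\frac{d}{dt}\tilde{\mathcal L}_{\sigma_1}(x(t),y(t))=-\|Uz\|^{2}+\sigma_{2}\|Ax-b\|^{2}
\]
for the running augmented Lagrangian, and Barbalat's Lemma. The organization, however, differs in a way worth noting. The paper applies Barbalat directly to $\tilde{\mathcal L}_{\sigma_1}$: it first invokes Theorem~\ref{optimality1} (every cluster point is optimal, hence $\tilde{\mathcal L}_{\sigma_1}(x(t),y(t))\to f(x^*)$), then shows $\frac{d}{dt}\tilde{\mathcal L}_{\sigma_1}$ is uniformly continuous, concludes $-\|Uz\|^{2}+\sigma_{2}\|Ax-b\|^{2}\to 0$, and finally uses Theorem~\ref{optimality1} a second time to get $Ax-b\to 0$ and hence $\|Uz\|^{2}\to 0$. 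You instead use the \emph{integral} form of Barbalat on $\|Ax-b\|^{2}$ and $\|Uz\|^{2}$ separately, obtaining the integrability of the first from the descent inequality for $V_{1}$ and of the second from the boundedness of $\Phi$ combined with the first. This buys you independence from Theorem~\ref{optimality1}: your argument is self-contained given only Theorems~\ref{xytbound} and \ref{alpha+infty}, whereas the paper's route leans twice on the optimality of cluster points. Your explicit check that $\dot u_{i}=-\gamma x_{i}^{3\gamma-1}z_{i}$ stays bounded (using $3\gamma-1\ge 1/2$) makes transparent why $\gamma\ge\tfrac12$ matters here; the paper achieves the same end more implicitly by asserting that $-\|Uz\|^{2}+\sigma_{2}\|Ax-b\|^{2}$ is $C^{1}$ in $(x,y)$ on the relevant compact set.
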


\begin{proof}
	From (\ref{auglagrangianf}), we can define ${\tilde {\cal L}}_{\sigma_1}(x(t),y(t))$ and from (\ref{zxyDef}) we have
	\begin{equation}\label{dauglf}
		\frac{d{\tilde {\cal L}}_{\sigma_1}(x(t),y(t))}{dt} = -\|Uz(x, y)\|^2 + \sigma_2\|Ax-b\|^2.
	\end{equation}
	Furthermore, from Theorems \ref{xytbound} and \ref{optimality1}, it is not hard to see that
	\begin{equation}\label{limitauglf}
		\lim\limits_{t\to +\infty}{\tilde {\cal L}}_{\sigma_1}(x(t),y(t)) = f(x^*).
	\end{equation}
	
	From Theorem \ref{xytbound}, we know that the solution $(x(t), y(t))$ of the ODE system (\ref{odeipalm}) is contained in a bounded closed set
	$\{(x, y)\in \R^n \times \R^m | \|x\|\leq M, \ \|y\|\leq M \}$ for some $M>0$. Moreover, $-\|Uz(x, y)\|^2 + \sigma_2\|Ax-b\|^2$ is continuously differentiable with respect to $(x, y)$ according to Assumption \ref{fxinC2}, hence when $(x, y)$ is in this bounded closed set, there must exist a constant $L_2>0$ such that
	\begin{eqnarray*}
		&&\left|\frac{d {\tilde {\cal L}}_{\sigma_1}(x(t),y(t))}{dt}\mid_{t=t_1} - \frac{d {\tilde {\cal L}}_{\sigma_1}(x(t),y(t))}{dt}\mid_{t=t_2}\right| \leq L_2\|(x(t_1), y(t_1)) - (x(t_2), y(t_2))\| \\
		&&\leq L_2\Big(\|x(t_1) - x(t_2)\| + \|y(t_1)- y(t_2)\|\Big) =  L_2 \left[ \| \int_{t_1}^{t_2} \frac{dx}{dt}dt \| + \| \int_{t_1}^{t_2} \frac{dy}{dt}dt \| \right].
	\end{eqnarray*}
	Using (\ref{dxi2}) and (\ref{dxiLM}), we have
	$$
	\left|\frac{d {\tilde {\cal L}}_{\sigma_1}(x(t),y(t))}{dt}\mid_{t=t_1} - \frac{d {\tilde {\cal L}}_{\sigma_1}(x(t),y(t))}{dt}\mid_{t=t_2}\right| \leq L_2(\sqrt{n}LM + \sqrt{m}L)|t_1-t_2|,
	$$
	thus $\frac{d {\tilde {\cal L}}_{\sigma_1}(x(t),y(t))}{dt}$ is uniformly continuous with respect to $t$. Hence from (\ref{dauglf}), (\ref{limitauglf}), and Barbalat's Lemma, we have that
	$$
	\lim\limits_{t\to +\infty} -\|Uz(x, y)\|^2 + \sigma_2\|Ax-b\|^2 = 0.
	$$
	Furthermore, from Theorem \ref{optimality1}, we know
	$$
	\lim\limits_{t\to +\infty} Ax-b = 0,
	$$
	hence
	$$
	\lim\limits_{t\to +\infty} -\|Uz(x, y)\|^2 = 0,
	$$
	which implies
	$$
	\lim\limits_{t\to +\infty} U^{2}\left[\nabla f(x) + A^Ty + \sigma_1A^T(Ax-b)\right] = 0.
	$$
	Thus the proof is completed.
\end{proof}

\begin{theorem}\label{strongcon1}
	Let $(x(t), y(t))$ be the solution of the ODE system (\ref{odeipalm}). Then the following results hold.
	\begin{itemize}
		\item[(a)] For any $(x^{(1)}, y^{(1)}) \in {\Omega}^1(x^0, y^0)$, $(x^{(2)}, y^{(2)}) \in {\Omega}^1(x^0, y^0)$,
		and any optimal solution $(\tilde y, \tilde z)$ of problem (\ref{dual}), $\|y^{(1)} - \tilde y\| = \|y^{(2)} - \tilde y\|$.
		\item[(b)] $x(t)$ converges to an optimal solution of problem (\ref{primal}) which has the maximal number of positive components
		in $\{x_1,\dots,x_s\}$ among all optimal solutions.
		\item[(c)] If there exists a pair of primal and dual optimal solutions satisfying the strict complementarity, then $(y(t), z(t)_{S})$ converges to an optimal solution of problem (\ref{dual}).
	\end{itemize}
\end{theorem}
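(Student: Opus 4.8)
\medskip
\noindent\emph{Proof proposal.} The plan is to use the potential function $V$ of (\ref{VxyDef}), exactly as in the proof of Theorem~\ref{xytbound}, but anchored at \emph{every} primal--dual optimal pair rather than only the one supplied by Assumption~\ref{optb}. First I would record two structural facts. Since (\ref{primal}) has no duality gap and both (\ref{primal}) and (\ref{dual}) attain their optima, any primal optimal $\hat x$ together with any dual optimal $(\tilde y,\tilde z)$ satisfies the KKT system (\ref{kktstar}); consequently $\nabla f$ is constant (equal to some $g^*$) on the optimal set — this is Lemma~\ref{gc} — and $\tilde z=(g^*+A^T\tilde y)_S$ is determined by $\tilde y$. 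Secondly, the support $B(x)=\{i\le s:x_i>0\}$ is maximised over the (convex) optimal set, since $B(\lambda x^1+(1-\lambda)x^2)=B(x^1)\cup B(x^2)$ for $\lambda\in(0,1)$, so optimal supports are closed under union; I fix a maximiser $x^*$, put $B^*=B(x^*)$, $N^*=S\setminus B^*$, and note that every optimal point vanishes on $N^*$.

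\smallskip
The engine is a mild generalisation of the derivative computation in Theorem~\ref{xytbound}: for any KKT triple $(\tilde x,\tilde y,\tilde z)$ one still gets $\frac{d}{dt}V(x(t),\tilde x,y(t),\tilde y)\le-\sigma_1\|Ax-b\|^2\le0$ (convexity of $f$ together with the inequality $f(\tilde x)\le f(x)+\tilde y^T(Ax-b)$ that the KKT conditions impose on all of $\R^n_{s+}$), and $V\ge0$ by Lemmas~\ref{g(x)} and~\ref{g(x)lambda}; hence $V(x(t),\tilde x,y(t),\tilde y)$ decreases to a finite limit. Two consequences I would extract: (1) any accumulation point $x^{(1)}$ of $x(t)$ satisfies $B(\tilde x)\subseteq B(x^{(1)})$, for otherwise the $\ln$/power term of $I$ in (\ref{potentialfun1a})--(\ref{potentialfun1b}) for some $i\in B(\tilde x)$ with $x^{(1)}_i=0$ would force $I(x(t_k),\tilde x)\to+\infty$; (2) this containment makes $V(\cdot,\tilde x,\cdot,\tilde y)$ continuous at accumulation points, so $\lim_t V(x(t),\tilde x,y(t),\tilde y)=V(x^{(1)},\tilde x,y^{(1)},\tilde y)$ for every accumulation point $(x^{(1)},y^{(1)})$, independent of the chosen subsequence.

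\smallskip
For (a) and the convergence asserted in (b), I would take two accumulation points $(x^{(1)},y^{(1)})$, $(x^{(2)},y^{(2)})$ (along $t_k\to\infty$ and $s_j\to\infty$); both $x^{(i)}$ are optimal by Theorem~\ref{optimality1}. Fixing any dual optimal $(\tilde y,\tilde z)$, the first structural fact makes $(x^{(1)},\tilde y,\tilde z)$ a KKT triple, so evaluating $\lim_t V(x(t),x^{(1)},y(t),\tilde y)$ along $t_k$ and along $s_j$ and equating gives $\tfrac1{2\sigma_2}\|y^{(1)}-\tilde y\|^2=I(x^{(2)},x^{(1)})+\tfrac1{2\sigma_2}\|y^{(2)}-\tilde y\|^2$; adding the mirror identity obtained from the KKT triple $(x^{(2)},\tilde y,\tilde z)$ yields $I(x^{(1)},x^{(2)})+I(x^{(2)},x^{(1)})=0$, whence $x^{(1)}=x^{(2)}$ and $\|y^{(1)}-\tilde y\|=\|y^{(2)}-\tilde y\|$. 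Thus $x(t)$ converges to some $\hat x$, which proves (a); applying consequence~(1) with $\tilde x=x^*$ gives $B^*\subseteq B(\hat x)$, and maximality of $B^*$ gives the reverse inclusion, so $B(\hat x)=B^*$, completing (b); note also that $x_i(t)\to0$ for $i\in N^*$.

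\smallskip
For (c), strict complementarity furnishes an optimal $(\hat x_0,\hat y_0,\hat z_0)$ with $\hat z_{0,i}>0$ exactly on $N^*$ (the first fact forces $\hat z_0$ to vanish on $B^*\cup\bar S$). From Theorem~\ref{weakcon1}, $U^{2}[\nabla f(x)+A^Ty+\sigma_1A^T(Ax-b)]\to0$ and $Ax-b\to0$; since the diagonal entries of $U^{2}$ indexed by $B^*\cup\bar S$ stay bounded away from $0$ and $\nabla f(x(t))\to g^*$, this forces $[A^Ty(t)]_i\to-g^*_i$ for $i\in B^*\cup\bar S$, so every accumulation point of $y(t)$, and every dual optimal $\tilde y$, lies in the affine set $\mathcal Y=\{y:[A^Ty]_i=-g^*_i,\ i\in B^*\cup\bar S\}$. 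A short verification identifies the dual optimal multipliers as $M=\mathcal Y\cap\{y:g^*_i+[A^Ty]_i\ge0,\ i\in N^*\}$; $\hat y_0$ satisfies these finitely many inequalities strictly, so $M$ has nonempty $\mathcal Y$-relative interior and $\operatorname{aff}(M)=\mathcal Y$. Combining with (a): two accumulation points $\hat y,\hat y'$ of $y(t)$ lie in $\mathcal Y$ and are equidistant from all of $M$, hence from all of $\operatorname{aff}(M)=\mathcal Y$, so $\hat y-\hat y'\perp(\mathcal Y-\mathcal Y)$; since $\hat y-\hat y'\in\mathcal Y-\mathcal Y$, we get $\hat y=\hat y'$. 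Thus $y(t)\to\hat y$ and $z(t)_S\to(g^*+A^T\hat y)_S=:\hat z$, with $\hat z=0$ on $B^*$, and $\hat z_i\ge0$ on $N^*$ because $\hat z_i<0$ would make $\dot x_i=-x_i^{2\gamma}z_i(t)>0$ for all large $t$, contradicting $x_i(t)\to0$; hence $(\hat y,\hat z)\in M$ is dual optimal. I expect the two delicate points to be: checking that the Lyapunov inequality and its finite-limit consequences genuinely survive for an arbitrary KKT triple (this legitimises both consequence~(1) and the symmetric identity in (a)); and, in (c), showing the candidate limit $\hat z$ is actually dual feasible — weak convergence controls only $U^{2}z(t)$, which gives no information on $z_i(t)$ for $i\in N^*$, so the sign of $\hat z_i$ there must be recovered from the ODE itself via the eventual monotonicity of $x_i(t)$ near $0$.
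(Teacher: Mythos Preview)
Your proposal is correct. For (a) and (b) it is essentially the paper's argument, reorganised: you establish the support inclusion $B(\tilde x)\subseteq B(x^{(1)})$ up front (the paper defers this to part (b)) and then add the two symmetric identities
\[
\tfrac1{2\sigma_2}\|y^{(j)}-\tilde y\|^2=I(x^{(3-j)},x^{(j)})+\tfrac1{2\sigma_2}\|y^{(3-j)}-\tilde y\|^2,\qquad j=1,2,
\]
to get $I(x^{(1)},x^{(2)})+I(x^{(2)},x^{(1)})=0$ in one stroke, whereas the paper first proves (a) via the one-sided inequality $I\ge 0$ and then feeds (a) back into (b) to force $I(x^{(2)},x^{(1)})=0$. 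This is a cosmetic reordering of the same Lyapunov computation.

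Part (c) is where the two proofs genuinely diverge. The paper constructs the segment $y^{[\lambda]}=(1-\lambda)y^*+\lambda y^{(1)}$, uses strict complementarity to show that a short piece $0\le\lambda\le\bar\lambda$ consists of dual optimal multipliers, and then runs a planar two-circle argument: on any $2$-plane through $y^*$, $y^{[\bar\lambda]}$, $y^{(1)}$, the circles centred at $y^*$ and at $y^{[\bar\lambda]}$ passing through $y^{(1)}$ are internally tangent (the three points are collinear), so $y^{(1)}$ is their only common point and no second accumulation point can survive (a). Your route is more structural: you identify the affine set $\mathcal Y=\{y:(g^*+A^Ty)_i=0,\ i\in B^*\cup\bar S\}$ that contains both every $y$-accumulation point and every dual optimal multiplier, use strict complementarity to conclude $\operatorname{aff}(M)=\mathcal Y$, and then note that the equidistance relation from (a) is an affine condition in $\tilde y$ and therefore propagates from $M$ to all of $\mathcal Y$, forcing any two accumulation points in $\mathcal Y$ to coincide. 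Your argument is cleaner and makes transparent what strict complementarity buys (it makes $M$ full-dimensional inside $\mathcal Y$); the paper's tangent-circle picture is more elementary but somewhat ad hoc. The closing verification that $\hat z_i\ge 0$ for $i\in N^*$, via eventual monotonicity of $x_i(t)$ from the ODE, is the same in both proofs.
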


\begin{proof}
	(a) It should be noticed that from (\ref{kktstar}), $(y^*, z^*_{S})$ is an optimal solution of problem (\ref{dual}). Hence the optimal solution set of problem (\ref{dual}) is not empty.
	Assume $\|y^{(1)} - \tilde y\| < \|y^{(2)} - \tilde y\|$. We can define $V_2(x, y)$ as follows
	\begin{equation}\label{V2xyDef}
		V_2(x, y) = V(x, x^{(1)}, y, \tilde y),
	\end{equation}
	for any $(x, y) \in \R^n_{s+}\times \R^m$. From Theorems \ref{x(t)positive} and \ref{alpha+infty}, $V_2(x(t), y(t))$ is well defined on $[t_0, +\infty)$, and from Lemma \ref{fconvex} and Theorem \ref{optimality1} we have
	$$
	\frac{d V_2(x(t), y(t))}{dt} \leq f(x^{(1)}) - \left[ f(x) + (\tilde y)^T(Ax-b) \right] - \sigma_1\|Ax-b\|^2.
	$$
	Since $(\tilde y, \tilde z)$ is an optimal solution of problem (\ref{dual}), we have
	\begin{eqnarray*}
		f(x^{(1)}) = f(x^*) &=& L(\tilde y, \tilde z) = \inf\limits_{x} {\cal L}(x, \tilde y, \tilde z) \\
		&\leq& f(x) + (\tilde y)^T(Ax-b) - \sum\limits_{i=1}^s x_i{\tilde z}_i \\
		&\leq& f(x) + (\tilde y)^T(Ax-b),
	\end{eqnarray*}
	which implies
	$$
	\frac{d V_2(x(t), y(t))}{dt} \leq - \sigma_1\|Ax-b\|^2 \leq 0.
	$$
	Furthermore $V_2(x, y)$ is bounded below by zero, hence $\lim\limits_{t\to +\infty} V_2(x(t), y(t))$ exists.
	Since $(x^{(1)}, y^{(1)}) \in {\Omega}^1(x^0, y^0)$, there exists $\{ {\bar t}_k \}_{k=0}^{+\infty}$ with ${\bar t}_k \to +\infty$ such that
	$$
	{\displaystyle \lim _{k\to +\infty }}(x({\bar t}_k), y({\bar t}_k)) = (x^{(1)}, y^{(1)}).
	$$
	This along with (\ref{V2xyDef}) and (\ref{VxyDef}) implies
	\begin{equation}\label{V2limit}
		\lim\limits_{t\to +\infty} V_2(x(t), y(t)) = \lim\limits_{k \to +\infty} V_2(x({\bar t}_k), y({\bar t}_k)) = \frac{1}{2\sigma_2}\|y^{(1)} - \tilde y\|^2.
	\end{equation}
	Similarly, since $(x^{(2)}, y^{(2)}) \in {\Omega}^1(x^0, y^0)$ and $\|y^{(1)} - \tilde y\| < \|y^{(2)} - \tilde y\|$, we have
	$$
	\lim\limits_{t\to +\infty} V_2(x(t), y(t)) \geq \frac{1}{2\sigma_2}\|y^{(2)} - \tilde y\|^2 > \frac{1}{2\sigma_2}\|y^{(1)} - \tilde y\|^2,
	$$
	which contradicts with (\ref{V2limit}). Hence the hypothesis is not true and we have $\|y^{(1)} - \tilde y\| \geq \|y^{(2)} - \tilde y\|$. Similarly, we can show that $\|y^{(1)} - \tilde y\| \leq \|y^{(2)} - \tilde y\|$ and thus $\|y^{(1)} - \tilde y\| = \|y^{(2)} - \tilde y\|$.
	
	\vskip 2mm
	
	\noindent (b) Without loss of generality, we assume that the optimal solution $x^*$ in Assumption \ref{optb} has the maximal number of positive components in $\{x_1,\dots,x_s\}$ among all optimal solutions. For any $(x^{(1)}, y^{(1)}) \in {\Omega}^1(x^0, y^0)$, from Theorem \ref{optimality1}, we know that $x^{(1)}$ is an optimal solution of problem (\ref{primal}). From (\ref{V1xyDef}), (\ref{V1xy}), and Lemmas \ref{g(x)} and \ref{g(x)lambda}, it is easy to see that for each $i\in B(x^*)$ , $x_i(t)$ is bounded below by some positive constant $c_i$. So from Theorem \ref{optimality1}, $x^{(1)}$ must have the maximal number of positive components in $\{x_1,\dots,x_s\}$ among all optimal solutions and $B(x^{(1)}) = B(x^*)$, $N(x^{(1)}) = N(x^*)$.
	
	\vskip 2mm
	
	\noindent From (\ref{VxyDef}) and (\ref{V2xyDef}), we have
	\begin{equation}\nonumber
		V_2(x, y) = I(x, x^{(1)}) + \frac{1}{2\sigma_2}\|y - \tilde y\|^2,
	\end{equation}
	where
	\begin{eqnarray}\label{Ixx1}
		&&I(x, x^{(1)}) = \sum\limits_{i=s+1}^n \frac{1}{2}(x_i-{x^{(1)}}_i)^2 + \nonumber \\
		&&\left\{
		\begin{array}{ll}
			\sum\limits_{i\in N(x^{(1)})} x_i + \sum\limits_{i\in B({x^{(1)}})} ( x_i-{x^{(1)}}_i - {x^{(1)}}_i \cdot \ln\frac{x_i}{{x^{(1)}}_i}) & {\rm if} \ \gamma = \frac{1}{2}, \ B(x^{(1)}) \subseteq B(x),\\
			\noalign{\vskip 1mm}
			\sum\limits_{i\in N(x^{(1)})} \frac{x_i^{2-2\gamma}}{2-2\gamma} + \sum\limits_{i\in B({x^{(1)}})} {\bigg [ } \frac{x_i^{2-2\gamma}-({x^{(1)}}_i)^{2-2\gamma}}{2-2\gamma}& \\
			-\frac{{x^{(1)}}_i}{1-2\gamma}\left(\frac{1}{x_i^{2\gamma-1}}-\frac{1}{{x^{(1)}}_i^{2\gamma-1}}\right) {\bigg ]} & {\rm if} \ \frac{1}{2} < \gamma<1, \ B(x^{(1)}) \subseteq B(x),\\
			\noalign{\vskip 1mm}
			+\infty & {\rm if} \ B(x^{(1)}) \nsubseteq B(x).
		\end{array}\right. \nonumber  \\
	\end{eqnarray}
	From Lemmas \ref{g(x)}, \ref{g(x)lambda}, and (\ref{Ixx1}), it is straightforward to see that $I(x, x^{(1)}) \ge 0$ for any $x\in \{x \in \R^n_{s+}\ | \ x_i>0 \ {\rm if} \ i\in B(x^{(1)})\}$
	and $I(x, x^{(1)})=0 \iff x=x^{(1)}$. For any $(x^{(2)}, y^{(2)}) \in {\Omega}^1(x^0, y^0)$, we also have $B(x^{(2)}) = B(x^{(1)}) = B(x^*)$, hence $I(x, x^{(1)})$ is continuous at $x^{(2)}$. Then since $\lim\limits_{t\to +\infty} V_2(x(t), y(t))$ exists from the proof of (a), we have that
	$$
	\lim\limits_{t\to +\infty} V_2(x(t), y(t)) = \frac{1}{2\sigma_2}\|y^{(1)} - \tilde y\|^2 = I(x^{(2)}, x^{(1)}) + \frac{1}{2\sigma_2}\|y^{(2)} - \tilde y\|^2,
	$$
	this along with (a) implies that $I(x^{(2)}, x^{(1)}) = 0$. Thus $x^{(1)} = x^{(2)}$.
	
	\vskip 2mm
	
	\noindent (c) Without loss of generality, we assume that $x^*$ and $(y^*, z^*_S)$ satisfy the strict complementarity. According to Assumption \ref{fxinC2} and Lemma \ref{gc}, $\nabla f(x)$ is constant on the optimal solution set of problem (\ref{primal}). For $(x^{(1)}, y^{(1)}) \in {\Omega}^1(x^0, y^0)$, if $y^{(1)} = y^*$, then $y(t)$ converges to $y^*$ by (a). In this case, from $\nabla f(x^*) = \nabla f(x^{(1)})$, (\ref{kktstar}), and (b), we have that $z(t)_S$ converges to $z^*_S$. Obviously, $(y^*, z^*_S)$ is an optimal solution of problem (\ref{dual}) by (\ref{kktstar}).
	
	\vskip 2mm
	
	Now we consider the case of $y^{(1)} \neq y^*$. For $z(x^{(1)}, y^{(1)})$, from Theorem \ref{weakcon1} and $B(x^{(1)}) = B(x^*)$, we know that for $s+1\leq i\leq n$ or $i\in B(x^*)$, $z(x^{(1)}, y^{(1)})_i = 0$. By the strict complementarity, for $i \in N(x^*)$, $z^*_i > 0$, thus there exists a $0<\bar \lambda <1$ such that for $0\leq \lambda \leq \bar \lambda$ we have that $z^{[\lambda]}_i > 0$ for $i\in N(x^*)$ with
	$$
	z^{[\lambda]} = (1-\lambda)z^* + \lambda z(x^{(1)}, y^{(1)}).
	$$
	Moreover, for $s+1\leq i\leq n$ or $i\in B(x^*)$, $z^{[\lambda]}_i = 0$ for $0\leq \lambda \leq 1$. Let $y^{[\lambda]} = (1-\lambda) y^* + \lambda y^{(1)}$. Then from $\nabla f(x^*) = \nabla f(x^{(1)})$, we have $z(x^*, y^{[\lambda]}) = z^{[\lambda]}$. Hence for $0\leq \lambda \leq \bar \lambda$, $(x^*, y^{[\lambda]}, z^{[\lambda]})$ also satisfies the KKT conditions (\ref{kktstar}), which implies that $(y^{[\lambda]}, z^{[\lambda]}_S)$ is an optimal solution of problem (\ref{dual}) for $0\leq \lambda \leq \bar \lambda$. If there is a $(x^{(2)}, y^{(2)}) \in {\Omega}^1(x^0, y^0)$ such that $y^{(2)} \neq y^{(1)}$, then there exists a 2-dimensional plane passing through $y^{(1)}$, $y^{(2)}$, and $y^*$. On this 2-dimensional plane, we denote the circle centered at $y^{[\bar \lambda]}$ with radius $\|y^{(1)} - y^{[\bar \lambda]}\|$ by ${\cal C}_1$ and denote the circle centered at $y^*$ with radius $\|y^{(1)} - y^*\|$ by ${\cal C}_2$. Then from (a), we know that $y^{(2)}$ should be on the two circles ${\cal C}_1$ and ${\cal C}_2$ simultaneously. But this is impossible since the two circles ${\cal C}_1$ and ${\cal C}_2$ have only one point in common. Hence $y^{(1)} = y^{(2)}$ and $y(t)$ converges as $t\to +\infty$. This along with (b), we have that $z(t)$ converges to $z(x^{(1)}, y^{(1)})$.
	
	\vskip 2mm
	
	For $i\in N(x^*)$, if $z(x^{(1)}, y^{(1)})_i < 0$, then there exists an $N_1>t_0$ such that for $t\geq N_1$, $z(t)_i < 0$ which implies that $x^{(1)}_i \geq x(N_1) > 0$. But this contradicts with $N(x^{(1)}) = N(x^*)$. Thus for $i\in N(x^*)$, $z(x^{(1)}, y^{(1)})_i \geq 0$. Furthermore, for $s+1\leq i\leq n$ or $i\in B(x^*)$, $z(x^{(1)}, y^{(1)})_i = 0$. Then it is easy to see that $(x^{(1)}, y^{(1)}, z(x^{(1)}, y^{(1)}))$ satisfies the KKT conditions (\ref{kktstar}). Hence $(y^{{1}}, z(x^{(1)}, y^{(1)})_S)$ is an optimal solution of problem (\ref{dual}). Thus the proof is completed.
\end{proof}

\section{Numerical examples}

In this section, we show the performance of the solution trajectory of the ODE system (\ref{odeipalm}) on two examples. The first example is from \cite{Qian18}, and has the following form
\begin{equation} \label{counterexamplec2}
	\begin{array}{rcl}
		\min & & F(x,y) \\
		{\rm s.t.\ }& &y\geq0,
	\end{array}
\end{equation}
where $x\in \R$ and $y\in \R$ are variables, the optimal solutions are on the $x$-axis $\{(x,0) | x \in \R \}$, and $F(x,y) \in C^2$ . For the details of this example, see Section 4 in \cite{Qian18}. The problem (\ref{counterexamplec2}) is actually a specific instance in \cite{Gilbert05}, where a class of examples are proposed to show that the central path may fail to converge. The problem (\ref{counterexamplec2}) is a special case of problem (\ref{primal}) with $A = 0$.

\vskip 2mm

We plot the central path and the solution trajectories of the ODE system (\ref{odeipalm}) with different initial points for problem (\ref{counterexamplec2}). For our solution trajectories, we let $\gamma = 0.75$, $\sigma_1 = \sigma_2 = 1$, and $t_0=0$. A Matlab solver {\bf ode23s} is used to compute the trajectories of the ODE system (\ref{odeipalm}). For the central path, a Matlab code provided by Prof. Karas of \cite{Gilbert05} is used. In Fig. \ref{cpsp} (a), {\bf cp} represents the central path and {\bf sp} represents the solution trajectory of the ODE system (\ref{odeipalm}). Both paths have the same initial point $(-1.5, 1)$. Fig. \ref{cpsp} (b) is just a magnified display of the solution trajectory in Fig. \ref{cpsp} (a). Fig. \ref{sps} shows the solution trajectories with $6$ different initial points.

\begin{center}
	\begin{figure}[h]
		\center
		\includegraphics[scale = 0.4]{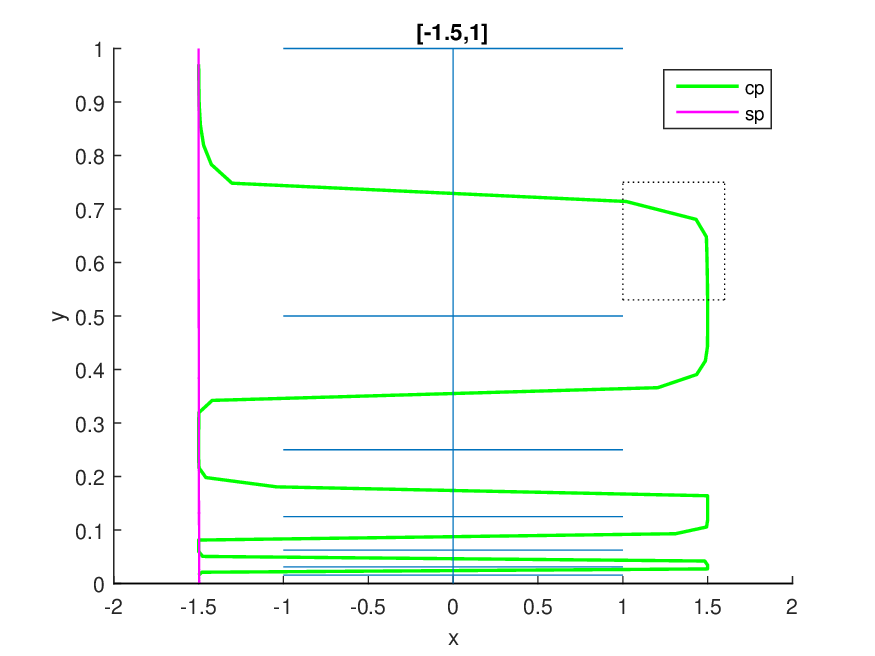}\includegraphics[scale = 0.4]{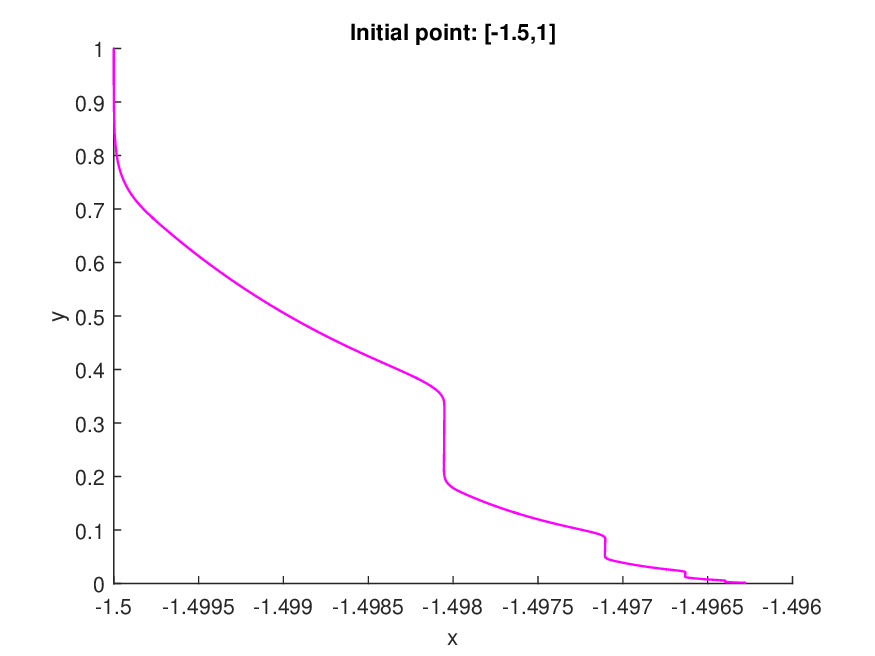}\\
		{\hskip -5mm {(a) Trajectories of cp and sp \hskip 25mm (b) Trajectory of sp}}
		\caption{Trajectories of the central path ({\bf cp}) and solution path ({\bf sp}) for problem (\ref{counterexamplec2})}\label{cpsp}
	\end{figure}
\end{center}

\begin{figure}[h]
	\center
	\includegraphics[scale = 0.4]{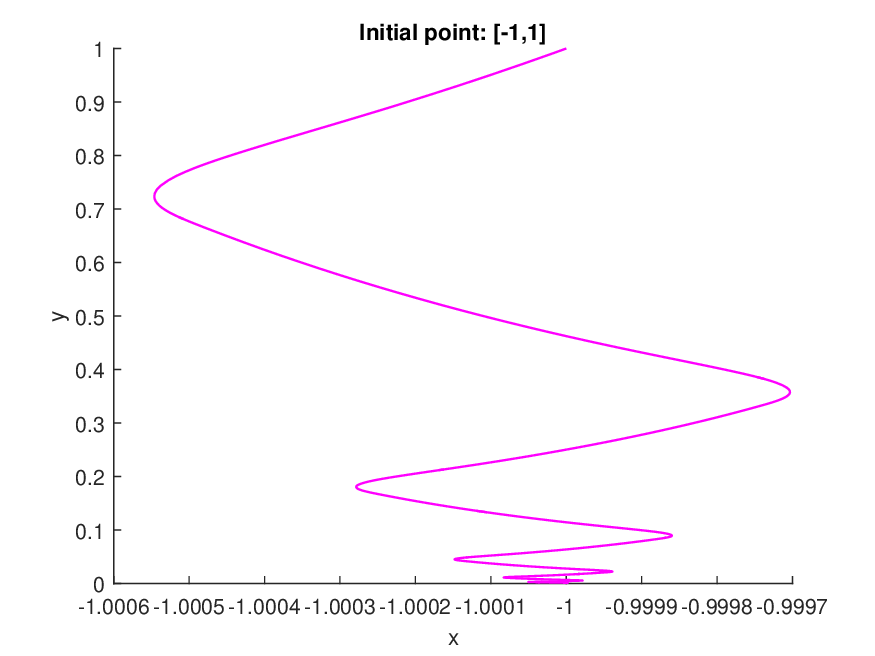}\includegraphics[scale = 0.4]{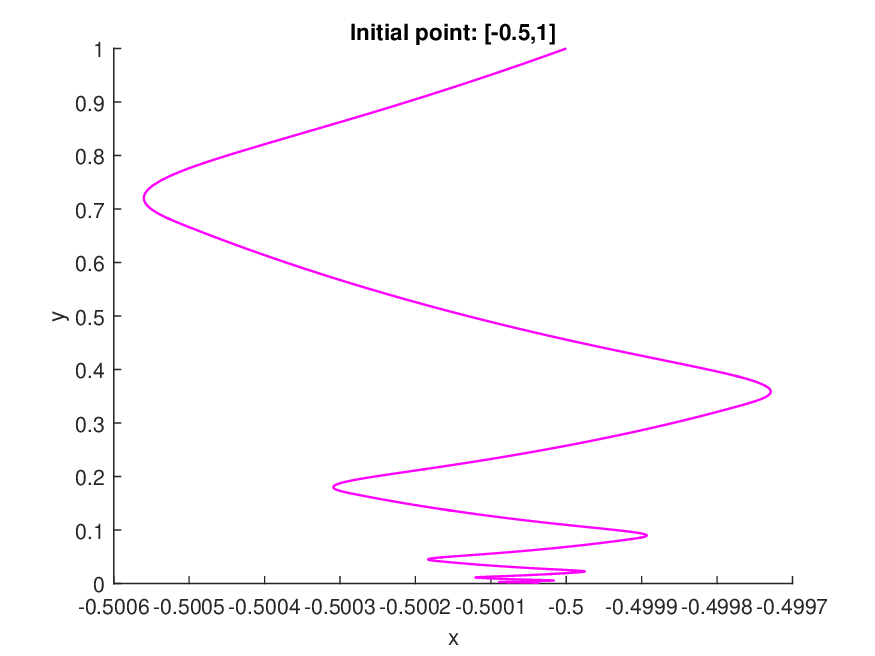}\\
	\includegraphics[scale = 0.4]{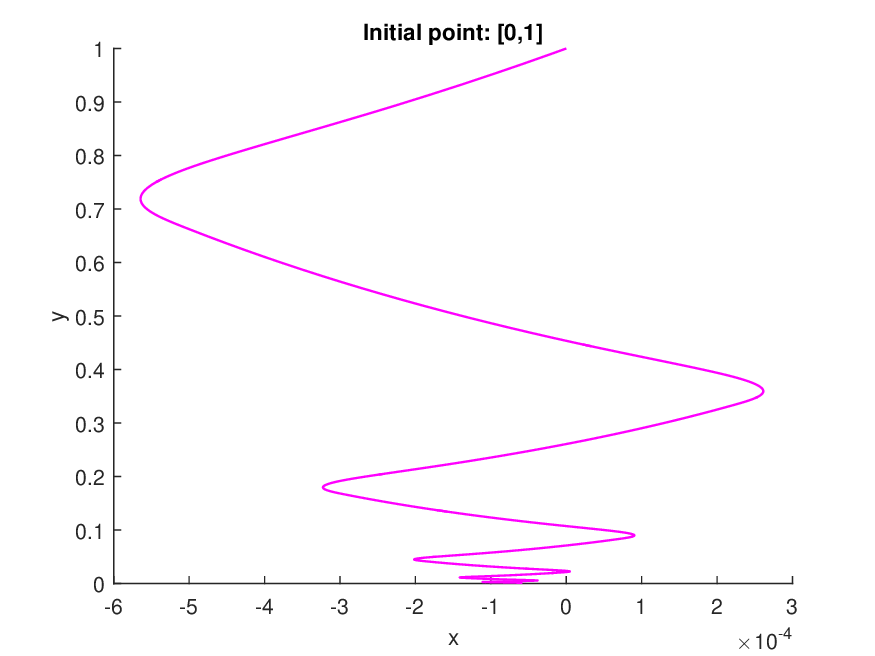}\includegraphics[scale = 0.4]{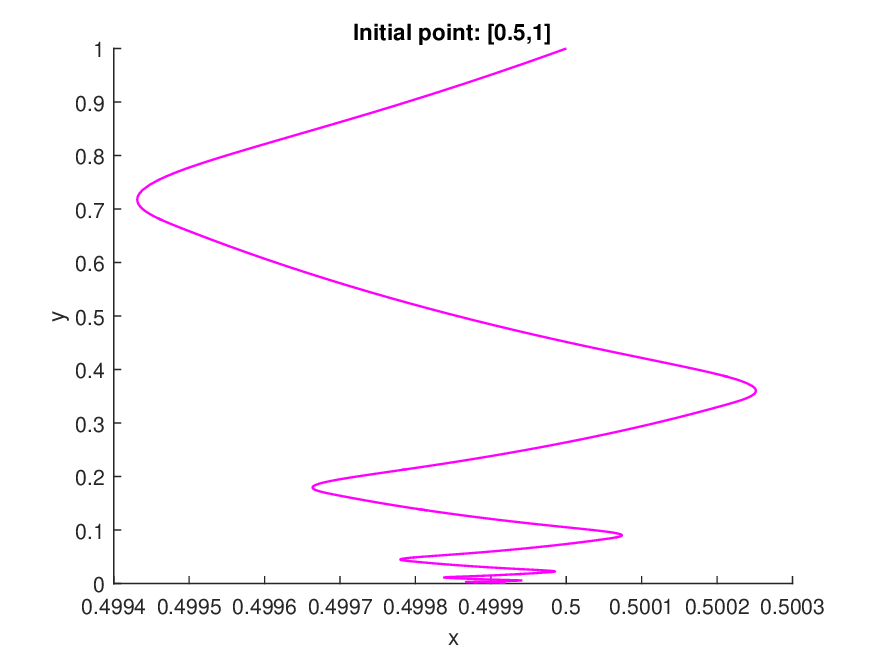}\\
	\includegraphics[scale = 0.4]{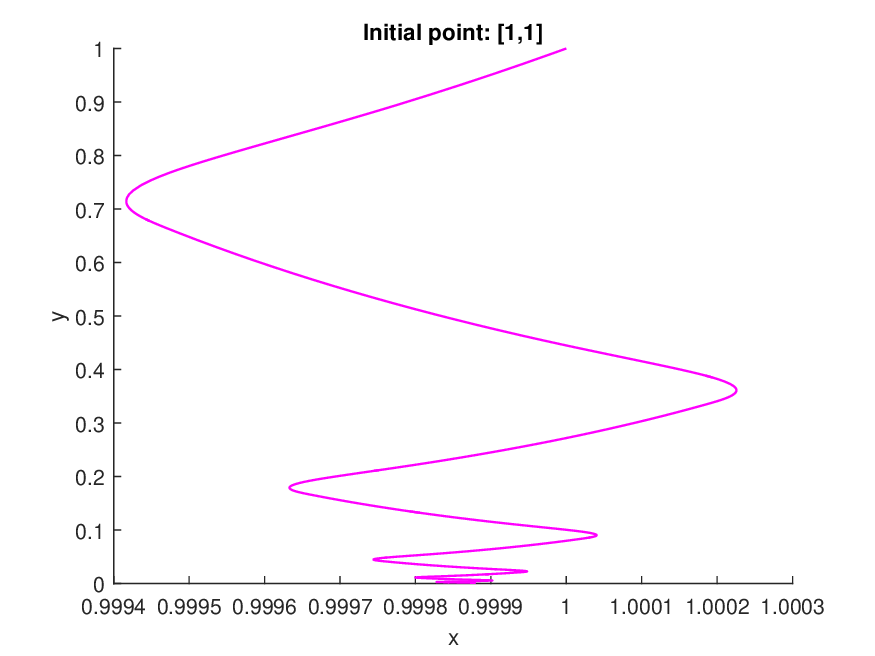}\includegraphics[scale = 0.4]{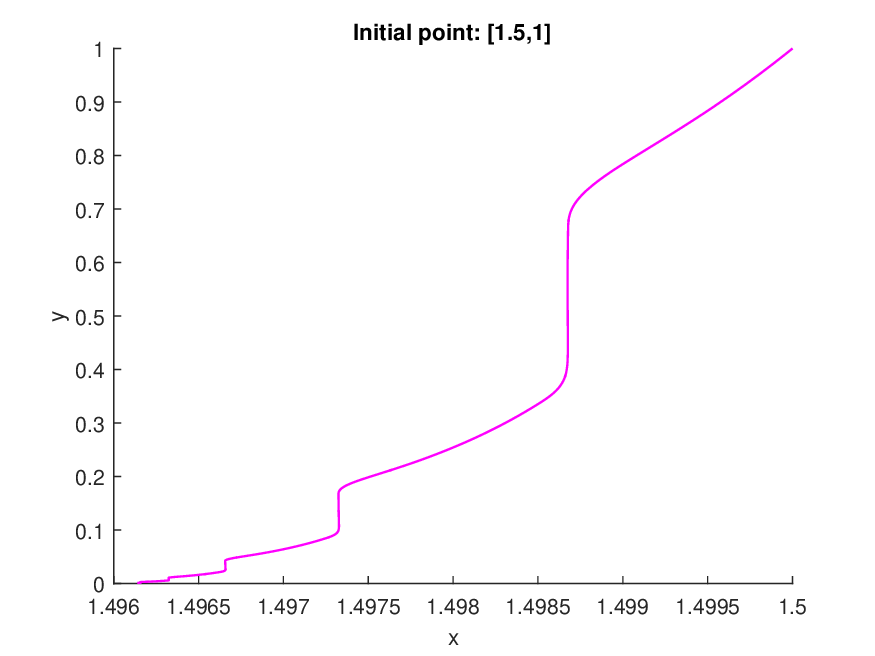}
	\caption{Trajectories of the solution paths with different initial points for problem (\ref{counterexamplec2})}\label{sps}
\end{figure}

Fig. \ref{cpsp} and Fig. \ref{sps} clearly show that for problem (\ref{counterexamplec2}), the central path is a zig-zag path with large swing, while the solution trajectories of the ODE system (\ref{odeipalm}) do converge.

\vskip 2mm

Next, we show the performance of the solution trajectory of the ODE system (\ref{odeipalm}) and primal (first-order) affine scaling trajectory for the following problem
\begin{equation}\label{example2}
	\begin{array}{rcl}
		\min & & f(x) = \frac{1}{24}\|x + c\|^4 + c^Tx \\
		{\rm s.t.\ }& &Ax= b, \ x\ge 0,
	\end{array}
\end{equation}
where $x\in \R^3$, $c = (1,1,1)^T$, $b = (1, 2)^T$, and
$$
A = \begin{pmatrix} 1 & 0 & 1 \\ 0 & 1 & 2\end{pmatrix}.
$$
It is easy to verify that $x^* = (0, 0, 1)^T$ is the unique optimal solution of problem (\ref{example2}), and $(x^*, y, z)$ satisfies the KKT conditions of problem (\ref{example2}) which are equivalent to
$$
\begin{pmatrix}y\\ z\end{pmatrix}
\in {\cal H} = \{ \begin{pmatrix}y\\z\end{pmatrix}
\in \R^5 | 3+y_1+2y_2=0,2+y_1=z_1, 2+y_2=z_2, z_3=0, z\geq 0  \}.
$$
This implies that ${\cal H}$ is actually the optimal solution set of the corresponding dual problem.

\vskip 2mm

The primal (first-order) affine scaling trajectory can be characterized by the following ODE system
\begin{equation}\label{odeaffine}
	\begin{array}{l}
		\frac{dx}{dt} = - XP_{AX}X\nabla f(x) , \ x(t_0) = {\tilde x}^0,
	\end{array}
\end{equation}
where $X = {\rm diag}(x)$, $P_{AX} = I - XA^T(AX^2A^T)^{-1}AX$, and ${\tilde x}^0$ needs to satisfy $A{\tilde x}^0 = b$ and ${\tilde x}^0> 0$. We plot the primal affine scaling trajectory and the solution trajectory of the ODE system (\ref{odeipalm}) from Matlab solver {\bf ode23s}. For the direction in (\ref{odeaffine}), the involved  linear system caused by the matrix $(AX^2A^T)^{-1}$ is solved by left division operator ``$\backslash$" in Matlab. For {\bf ode23s}, the relative error tolerance ``RelTol" is set as $1.0e-6$, and the absolute error tolerance ``AbsTol" is set as $1.0e-9$. For our solution trajectory, we set $\gamma = 0.75$, $\sigma_1 = \sigma_2 = 1$, and $t_0=0$. In Fig. \ref{affsp}, the magenta trajectories and the blue trajectories represent the coordinate behaviors of the solution trajectory $x(t)$ of the ODE system (\ref{odeipalm}) and the primal affine scaling trajectory, respectively. The two green dashed lines show the behaviors of our solution trajectory $y(t)$, and the green line represents $(y_1(t) + 2y_2(t))/3$, which should converge to $-1$ according to the definition of  ${\cal H}$. The initial point ${\tilde x}^0$ for the primal affine scaling trajectory is ${\tilde x}^0 = (0.5, 1, 0.5)^T$, and the initial point $(x^0, y^0)$ for the solution trajectory of the ODE system (\ref{odeipalm}) is $x^0 = (1, 1, 1)^T$ and $y^0 = (0, 1)^T$.

\vskip 2mm

\begin{figure}[h]
	\center
	\includegraphics[scale = 0.8]{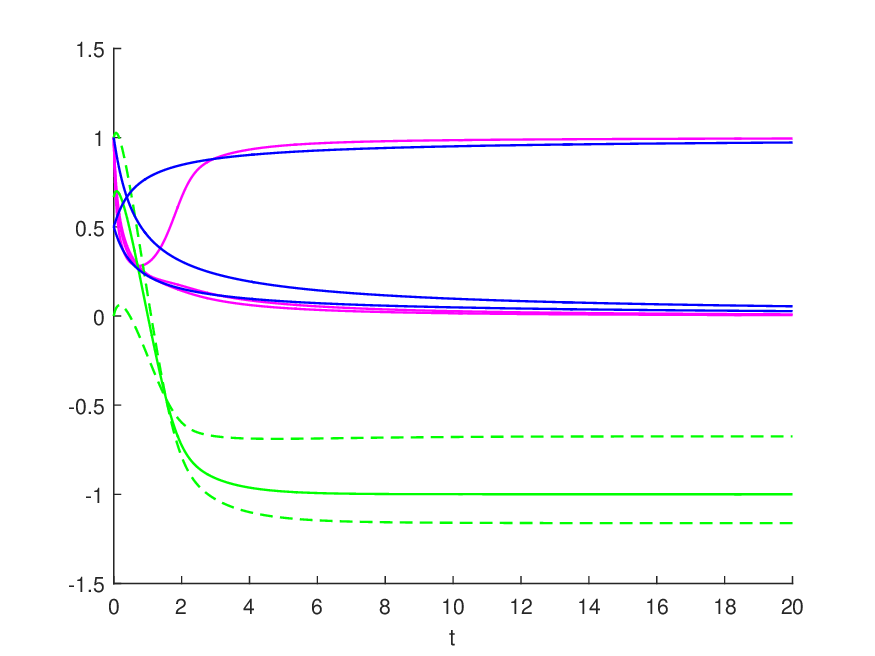}
	\caption{Trajectories of the primal affine scaling path ({\bf ap}) and solution path ({\bf sp}) for problem (\ref{example2})}\label{affsp}
\end{figure}

Fig. \ref{affsp} shows that both the primal affine scaling trajectory and our solution trajectory of the ODE system (\ref{odeipalm}) converge, and our solution trajectory actually converges faster than the primal affine scaling trajectory.

\vskip 2mm

Table \ref{Table1} lists the results at the end point $x(T)$ of the solution trajectory of the ODE system (\ref{odeipalm}) and the primal affine scaling trajectory for various integral intervals $[0, T]$. In Table \ref{Table1}, {\bf sp} represents the solution path of the ODE system (\ref{odeipalm}), and {\bf ap} represents the primal affine scaling path. $\min(x(T))$ means the minimum coordinate value of $x(T)$, and $\kappa(T)$ represents the condition number of the matrix $AX^2A^T$ (the inverse of $AX^2A^T$ is involved in the affine scaling direction) at point $x(T)$ for the primal affine scaling trajectory. In Table \ref{Table1}, NaN means that {\bf ode23s} failed due to the ill-conditioning.

\begin{table}[!t]
	\centering
	\caption{Performance of solution path ({\bf sp}) and affine scaling path ({\bf ap}) for problem (\ref{example2})}
	\begin{tabular}{|r||rr|rr|rr|r|}
		\noalign{\vskip 1mm}
		\hline
		\multicolumn{1}{|c|}{} &\multicolumn{2}{c|}{$\|x(T)-x^*\|_{\infty}$}&\multicolumn{2}{c|}{$\|Ax(T)-b\|$}&\multicolumn{2}{c|}{$\min(x(T))$}& $\kappa(T)$\\
		\hline
		$T$ & {\bf sp} & {\bf ap} & {\bf sp} & {\bf ap} & {\bf sp} & {\bf ap} & {\bf ap} \\
		\hline\hline
		10 & 2.7e-2&9.7e-2&1.6e-3&2.5e-12&1.5e-2&4.9e-2&1.2e+3\\
		$10^2$&5.2e-4&1.3e-2&3.5e-5&4.1e-11&2.2e-4&6.3e-3&7.9e+4\\
		$10^3$&5.6e-6&1.3e-3&4.4e-7&2.8e-10&2.3e-6&6.6e-4&7.2e+6\\
		$10^4$&5.6e-8&1.3e-4&5.6e-9&1.1e-8&2.3e-8&6.7e-5&7.0e+8\\
		$10^5$&5.5e-10&1.3e-5&1.3e-10&5.1e-7&2.2e-10&6.7e-6&7.0e+10\\
		$10^6$&5.3e-12&1.3e-6&7.8e-13&2.2e-6&2.2e-12&6.6e-7&7.0e+12\\
		$10^7$&4.9e-14&1.1e-6&4.0e-15&2.6e-6&2.0e-14&8.9e-8&5.9e+14\\
		$10^8$&4.0e-16&1.1e-6&5.0e-16&2.5e-6&1.6e-16&7.2e-9&3.5e+15\\
		$10^9$&4.0e-18&NaN&0&NaN&1.7e-18&NaN&NaN\\
		\hline
	\end{tabular}\label{Table1}
\end{table}

Table \ref{Table1} clearly shows that for the degenerate case where $AX^2A^T$ is singular at some point on the boundary of the feasible region, the interior point method could encounter ill-conditioning problem, while our method is matrix-free.

\vskip 2mm

\section{Further discussion}

In this section, we discuss about the possible search directions and algorithms which derived from the ODE system (\ref{odeipalm}). For the ODE system (\ref{odeipalm}), the simplest implementation way is the explicit Euler scheme. In fact, from any initial point $x^0 \in {\R^n_{s++}}$, this algorithm have the forms
\begin{equation}\label{iterationx}
	x^{k+1} = x^k + \alpha^k d_x^k, \ k=0,1,... ,
\end{equation}
and
\begin{equation}\label{iterationy}
	y^{k+1} = y^k + \alpha^k d_y^k, \ k=0,1,... ,
\end{equation}
where $\alpha^k$ is the step size,
$$
d_x^k = - U(x^k)^2 [\nabla f(x^k) + A^Ty^k + \sigma_1A^T(Ax^k-b)] = - U(x^k)^2z(x^k, y^k),
$$
and
$$
d_y^k = \sigma_2(Ax^k-b).
$$
In order to guarantee the optimality of the algorithm, some line search strategies and restrictions may be needed to decide the step size $\alpha^k$.

\vskip 2mm

At $(x^k, y^k)$, some semi-implicit directions can also be generated by the ODE system (\ref{odeipalm}). One possible way is that
\begin{eqnarray*}
	x^{k+1} - x^k &=& h_kd^k_x = -h_kU(x^k)^2[\nabla f(x^{k+1}) + A^Ty^k + \sigma_1A^T(Ax^k-b)] \\
	&\thickapprox& -h_kU(x^k)^2[\nabla f(x^k) + \nabla^2 f(x^k)(x^{k+1} - x^k) + A^Ty^k + \sigma_1A^T(Ax^k-b)],
\end{eqnarray*}
where $h_k$ represents the step size. This $d^k_x$ can be calculated approximately as
$$
d^k_x \thickapprox - U(x^k)\left[ I + h_kU(x^k)\nabla^2 f(x^k)U(x^k)  \right]^{-1}U(x^k)[\nabla f(x^k) + A^Ty^k + \sigma_1A^T(Ax^k-b)],
$$
which is a new search direction. If we want to follow the particular solution trajectory of the ODE system (\ref{odeipalm}) with the initial point $(x^k, y^k)$, $x^k$ should move a small $h_k$ along the search direction $d^k_x$. However, because of the residual, the point $x^k+h_kd^k_x$ may not be in ${\R^n_{s++}}$. Moreover, our purpose is not simulating any particular solution trajectory of the ODE system (\ref{odeipalm}), instead, we aim at finding the limit point of the solution path. Thanks to Theorem \ref{strongcon1}, the limit point of $x(t)$ from any initial point is an optimal solution for problem (\ref{primal}). Hence we can do a line search along this search direction, and $h_k$ can be regarded as a positive parameter. This direction $d^k_x$ involves an inverse of a $n\times n$ symmetric positive definite matrix, which needs too many calculations for large scale problems. Hence, in practice, the solution of the corresponding linear equations could be solved approximately.

\vskip 2mm

Another semi-implicit search direction $d^k_x$ can be generated from the following form
\begin{eqnarray*}
	x^{k+1} - x^k &=& h_kd^k_x = -h_kU(x^k)^2[\nabla f(x^{k+1}) + A^Ty^k + \sigma_1A^T(Ax^{k+1}-b)] \\
	&\thickapprox& -h_kU(x^k)^2[\nabla f(x^k) + \nabla^2 f(x^k)(x^{k+1} - x^k) + A^Ty^k \\
	&& + \sigma_1A^TA(x^{k+1} - x^k) + \sigma_1A^T(Ax^k-b)],
\end{eqnarray*}
where $h_k$ also represents the step size. Then $d^k_x$ can be calculated approximately as
\begin{eqnarray*}
	d^k_x &\thickapprox& - U(x^k)\left[ I + h_kU(x^k)(\nabla^2 f(x^k) + \sigma_1A^TA)U(x^k)  \right]^{-1} \\
	&& U(x^k)[\nabla f(x^k) + A^Ty^k + \sigma_1A^T(Ax^k-b)].
\end{eqnarray*}
Similar to the discussion of the first semi-implicit search direction, $h_k$ can be regarded as a positive parameter and the step size can be decided by a line search.

\vskip 2mm

Now in order to obtain more search directions, we make a division of the index set $\{1, 2, \cdots, n  \}$. Let $B_i$ $(i = 1, 2, \cdots, p)$ be $p$ nonempty index sets with $p\leq n$ such that $B_i\cap B_j = \varnothing$ for $i\neq j$ and $\bigcup\limits_{i=1}^p B_i = \{1, 2, \cdots, n  \}$. For the explicit Euler scheme of the ODE system (\ref{odeipalm}), the iteration (\ref{iterationx}) can be divided to $p$ sequential steps according to the index division, which means that $x^k_{B_i}$ moves to $x^k_{B_i} + \alpha^k(d^k_x)_{B_i}$ sequentially from $i=1$ to $p$ at each iteration. Then when we calculate $x^{k+1}_{B_i}$, we may use $x^{k+1}_{B_j}$ instead of $x^k_{B_j}$ for $1\leq j <i$, and this generates a new iteration. But there comes a question: how to decide the step size? One possible solving way is to calculate the step size $\alpha^k_i$ for each $x^k_{B_i}$ respectively. The $\alpha^k_i$ may be different from each other, which seems unreasonable since then the search direction at each iteration can not be derived from the ODE system (\ref{odeipalm}) directly, but this can be solved by bring in a weighted vector for the ODE system (\ref{odeipalm}). Actually if we replace $\frac{dx}{dt}$ in the ODE system (\ref{odeipalm}) by
$$
\frac{dx}{dt} = -WU^{2}\left[\nabla f(x) + A^Ty + \sigma_1A^T(Ax-b)\right] ,
$$
where $W = diag(w_1, w_2, \cdots, w_n)$ is a diagonal matrix with $w_j = \alpha^k_i$ if $j\in B_i$, then corresponding to this weighted ODE system, the new iteration can be derived from the following semi-implicit form
\begin{eqnarray*}
	x^{k+1}_{B_1} - x^k_{B_1} = h_k(d^k_x)_{B_1} &=& -h_k\left(WU(x^k)^{2}[\nabla f(x^k) + A^Ty^k + \sigma_1A^T(Ax^k-b)]\right)_{B_1} \\
	x^{k+1}_{B_2} - x^k_{B_2} = h_k(d^k_x)_{B_2} &=& -h_k\left(WU(x^k)^{2}[\nabla f(x^{k+1}_{B_1},x^{k}_{B_2},\cdots,x^k_{B_p}) + A^Ty^k \right.\\
	&& + \left. \sigma_1A^T(A(x^{k+1}_{B_1},x^k_{B_2},\cdots,x^k_{B_p})^T-b)]\right)_{B_2} \\
	&\vdots& \\
	x^{k+1}_{B_p} - x^k_{B_p} = h_k(d^k_x)_{B_p} &=& -h_k\left(WU(x^k)^{2}[\nabla f(x^{k+1}_{B_1},\cdots,x^{k+1}_{B_{p-1}},x^k_{B_p}) + A^Ty^k \right. \\
	&& + \left. \sigma_1A^T(A(x^{k+1}_{B_1},\cdots,x^{k+1}_{B_{p-1}},x^k_{B_p})^T-b)]\right)_{B_p}.
\end{eqnarray*}
It is evident that with step size equals to one, the above form generates the new iteration. Furthermore, it is easy to see that by similar proofs, the same results of the ODE system (\ref{odeipalm}) hold for this weighted ODE system.

\vskip 2mm

For the division $B_i$ $(i=1, 2, \cdots, p)$ of index set $\{1, 2, \cdots, n  \}$, there are many other search directions which can be generated from the ODE system (\ref{odeipalm}). Here we enumerate two more possible implementations. One possible implementation way is the following semi-implicit form
\begin{eqnarray*}
	x^{k+1}_{B_1} - x^k_{B_1} = h_k(d^k_x)_{B_1} &=& -h_k\left(U(x^k)^{2}[\nabla f(x^{k+1}_{B_1},x^{k}_{B_2},\cdots,x^k_{B_p}) + A^Ty^k \right.\\
	&& + \left. \sigma_1A^T(A(x^{k+1}_{B_1},x^k_{B_2},\cdots,x^k_{B_p})^T-b)]\right)_{B_1} \\
	x^{k+1}_{B_2} - x^k_{B_2} = h_k(d^k_x)_{B_2} &=& -h_k\left(U(x^k)^{2}[\nabla f(x^{k}_{B_1},x^{k+1}_{B_2},\cdots,x^k_{B_p}) + A^Ty^k \right.\\
	&& + \left. \sigma_1A^T(A(x^{k}_{B_1},x^{k+1}_{B_2},\cdots,x^k_{B_p})^T-b)]\right)_{B_2} \\
	&\vdots& \\
	x^{k+1}_{B_p} - x^k_{B_p} = h_k(d^k_x)_{B_p} &=& -h_k\left(U(x^k)^{2}[\nabla f(x^{k}_{B_1},\cdots,x^{k}_{B_{p-1}},x^{k+1}_{B_p}) + A^Ty^k \right. \\
	&& + \left. \sigma_1A^T(A(x^{k}_{B_1},\cdots,x^{k}_{B_{p-1}},x^{k+1}_{B_p})^T-b)]\right)_{B_p}.
\end{eqnarray*}
Similarly, the $d^k_x$ can be calculated approximately as
\begin{eqnarray}\label{secdkx}
	(d^k_x)_{B_i} &\thickapprox& - U(x^k)_{B_{i}B_{i}}\left[ I + h_kU(x^k)_{B_{i}B_{i}}(\nabla^2 f(x^k) + \sigma_1A^TA)_{B_{i}B_{i}}U(x^k)_{B_{i}B_{i}}  \right]^{-1} \nonumber \\
	&& U(x^k)_{B_{i}B_{i}}[\nabla f(x^k) + A^Ty^k + \sigma_1A^T(Ax^k-b)]_{B_i},
\end{eqnarray}
for $i = 1, 2, \cdots, p$, and $h_k$ can be regarded as a positive parameter and the step size can be decided by a line search.  It is interesting to see that for this implementation, the iteration for $x^k$ can also be divided to $p$ sequential steps. Hence similar to the discussion before, when we calculate $x^{k+1}_{B_i}$, we can use $x^{k+1}_{B_j}$ instead of $x^{k}_{B_j}$ for $1\leq j <i$, and decide the step size $\alpha^k_i$ by the line search respectively. However, this new search direction in this new iteration scheme seems not be able to be derived from the ODE system (\ref{odeipalm}), since each $\alpha^k_i$ may be not equal to $h_k$. Hence in this situation, we may regard this new iteration scheme as a partially updated algorithm in the sense that at each iteration, for the $d^k_x$ in (\ref{secdkx}), we only move one $x^k_{B_i}$ along $(d^k_x)_{B_i}$, and proceed this process from $i=1$ to $p$ sequentially and then loop.

\vskip 2mm

In the above discussion, we only considered the $d^k_x$. For $d^k_y$, we can use $\sigma_2(Ax^k - b)$ or $\sigma_2(Ax^{k+1} - b)$. Different choice can lead to different algorithms. It should be noticed that, some of the above search directions involve the inverse of the $n\times n$ matrix or the block sub-matrix, but the condition number of them can be controlled by the restriction of the hyperparameters. In this section, we only discuss the possible search directions for discrete algorithms briefly, and the study of these algorithms can be the future research.

\vskip 2mm

\section{Concluding remarks}
Ill-conditioning subproblems arise in the computation of many interior point algorithms for linearly constrained convex programming problems.
In this paper, we have developed a matrix-free interior point augmented Lagrangian continuous trajectory
for these problems. This continuous trajectory can be viewed as the solution of the ODE system (\ref{odeipalm}) and only matrix-vector product is required
in this ODE system (\ref{odeipalm}).
It has been proved that starting from any interior feasible point, this continuous trajectory would always converge to some optimal
solution of the original problem. Therefore, a stepping stone is laid for developing a matrix-free numerical scheme to follow this convergent continuous trajectory. In particular, we discuss several possible search directions for discrete algorithms briefly, and further study could be the future research.

\bibliographystyle{plain}
\bibliography{IPALM_ref.bib}

\end{document}